\def\GG{\mathcal{G}}
\def\AA{\mathcal{A}}
\def\CC{\mathbb{C}}
\def\PP{\mathcal{P}}
\def\VV{\mathcal{V}}
\title{Universal Minimal Flow in the Theory of
Topological Groupoids}
\author{Riccardo Re}
\address{Dipartimento di Matematica e Informatica, Universit\`a di Catania,
Viale A. Doria 6, 95125 - Catania (Italy)}
\email{riccardo@dmi.unict.it}
\author{Pietro Ursino}
\address{Dipartimento di Scienza e Alta Tecnologia, Universit\`a dell'Insubria,
Via Valleggio 11, 22100 - Como (Italy)}
\email{pietro.ursino@uninsubria.it}
\begin{document}
\maketitle
\newtheorem{thm}{Theorem}
\newtheorem{defin}{Definition}
\newtheorem{prop}{Proposition}
\newtheorem{lm}{Lemma}
\newtheorem{cor}{Corollary}

\theoremstyle{remark}
\newtheorem{oss}{\bf Remark}
\newtheorem{example}{Example}
\newtheorem{domanda}{Question}
\newtheorem{notation}{Notation}

\section{Introduction}
In this paper we investigate some connections between Topological Dynamics, the theory of $G$-Principal Bundles, and the theory of Locally Trivial Groupoids.

Topological Dynamics is the study of continuous actions of (Hausdorff) topological groups $G$ on (Hausdorff) compact spaces $X$ ($G$-flows).  For notations and results of this theory we mainly follow and refer to \cite{Aus}, \cite{usp} and \cite{KPT}.

Every topological group $G$ has some natural compactifications. They
can be described as the maximal ideal spaces of certain function algebras and using the particular structures of those spaces it is possible to grasp new information about the group itself.
In particular the greatest ambit $S(G)$ is the compactification corresponding
to the algebra of all right uniformly continuous bounded
functions on $G$. There is a natural action of $G$ on $S(G)$ which has a universal property with respect to all actions of $G$ on compact spaces.  There also exists the universal minimal compact $G$-space $M_G$, which is a minimal $G$-flow which can be homomorphically mapped onto any other minimal $G$-flow. The flow $M_G$ can be derived from $S(G)$, indeed it can be constructed as any minimal flow of $S(G)$, so $S(G)$ describes, in a sense, universally the dynamics of $G$, or, in other terms, it makes explicit some general features which describe how $G$ acts on a generic compact space. It is natural to expect that $S(G)$ reflects many of the geometrical and algebraic properties of $G$, however it usually is a too big object to study in itself. In particular the homotopy theory of the group $G$, being reflected in the theory of principal $G$-bundles on spheres and, more generally, on locally contractible spaces, requires to develop a relative version of the $S(G)$ construction with analogous universal properties, more precisely it requires to study fiber bundles with fibers isomorphic to $S(G)$ over spaces with suitable topological properties.

Now there is a natural correspondence, developed by C. Ehresmann in \cite{Ehr}, between $G$-principal bundles over locally simply connected bases and locally trivial groupoids, see section 3, Definition 4.
In order to exploit this correspondence,  we move from groups towards locally trivial groupoids and from group actions on compact spaces towards groupoid actions on spaces with proper maps onto a locally compact base.
This allows us to extend part of topological dynamic to groupoid actions. For example, as we said above, every topological group $G$ has a universal minimal flow $M_G$,
here we prove the existence of an analogous one for groupoids $\GG=(G_0,G_1)$, with locally compact unit space $G_0$ and which act on spaces $X$ with a proper map $\rho:X\to G_0$ (see Section 2 for the definition of a topological groupoid and Section 5 for the definition of a groupoid action).
This reveals a correspondence between the fixed point on compacta property (e.g. Extreme Amenability) and the existence of global sections in the theory of fiber bundles. Besides that, it sheds some light to a possible relationship between contractibility and extreme amenability for arcwise connected groups. In a sense it seems that extremely amenable phenomenon can be divided in two categories extremely disconnected groups which mostly can be treated using combinatorial arguments as Ramsey Theory \cite{KPT} and connected groups which can be treated using concentration of measure technique \cite{gromi} \cite{GioPe1}. Our ultimate goal, for future work on the subject, is to deal with this latter class of groups using techniques of algebraic topology. The present paper, besides the results it contains, can possibly show that this perspective is not unreasonable.

The paper is organized as follows.
In Section \ref{sec:prelim} we review the concept of topological groupoids.
In Section \ref{sec:ehresmann} we give a self-contained treatment of Ehresmann's theory relating locally trivial groupoids with principal bundles.
In Section \ref{UnivComp} we rephrase the notion of greatest $G$-ambit in the context of the theory of principal bundles by defining $S(\PP)$ as a compactification of a $G$-principal bundle $P$.
In Section \ref{GroAct} we show how the groupoid acts on $S(\PP)$, considered itself as a fiber bundle.
In Section \ref{UnivProp} we shows that $S(\PP)$ can play the same role that $S(G)$ plays in the usual theory, since it enjoys a similar universal property.
Using this result we can assert in Section \ref{MinFlow} the universality of $M(\PP)$, as minimal subflow of $S(\PP)$.

In the last section we deal with the extremely amenable case and make comments on a possible perspective for future work aiming to use concepts and methods of algebraic topology for dealing with a class of extremely amenable groups with suitable topological properties.

\section{Preliminaries and Definitions}\label{sec:prelim}

In the following we will consider groupoids in the context of topological spaces. The most concise definition of a groupoid is the following.
\begin{defin}
A groupoid is a small category G in which every morphism is invertible.
\end{defin}

In order to make concrete the above definition, it is common to introduce the following notations.
The set of objects, or units, of $\GG$ will
be denoted by
$$G_0 = Ob(\GG).$$
The set of morphisms, or arrows, of $\GG$ will be denoted by
$$G_1 = Mor(\GG).$$

We will denote by $s(g)$ the source, i.e. the domain (respectively $t(g)$ the target, i.e. the range) of the morphism g. We thus obtain functions
$$s,t: G_1\rightarrow G_0$$
The multiplication operator $m:(g,h)\rightarrow gh$ is defined on the set of composable pairs of arrows $G_1\times_{r,s} G_1:=\{(g, h)\mid s(g)=r(h)\}$
$$m: G_1\times_{r,s} G_1\rightarrow G_1$$

The inversion operation is a bijection $()^{-1}:g\rightarrow g^{-1}$ of $G_1$.
Denoting by $u(x)$ the unit map of the object $x\in G_0$, we obtain an inclusion of
$G_0$ into $G_1$. We see that a groupoid $\GG$ is completely determined by the spaces
$G_0$ and $G_1$ and the structural morphisms $s,\ t,\ m,\ u,\ ()^{-1}$.  The structural maps satisfy the following properties:
\begin{itemize}
\item[(i)] $t(gh) = t(g), s(gh) = s(h)$ for any pair $(g, h)\in G_1\times_{r,s} G_1$, and the partially defined
multiplication $m$ is associative.
\item[(ii)] $s(u(x)) = r(u(x)) = x,\ \forall x\in G_0,\ u(t(g))g = g$ and $gu(s(g)) = g,\ \forall g\in G_1$
and $u : G_0\rightarrow G_1$ is one-to-one.
\item[(iii)] $t(g^{-1}) = s(g), s(g^{-1}) = t(g), gg^{-1} = u(t(g))$ and $g^{-1}g = u(s(g))$.
\end{itemize}
Then one can define a topological groupoid as follows.
\begin{defin}
A groupoid $\GG$ is a topological groupoid if $G_0$ and $G_1$ are topological Hausdorff spaces and all the structure maps
$s,t,m,u,()^{-1}$ are continuous functions.
\end{defin}

\begin{notation}
For our convenience the multiplication $m$ of the elements $g,h\in G_1$ will follow inverse notation, that is
$s(gh)=s(g)$ and $t(gh)=t(h)$. For $x,y\in G_0$ we define $$G[x,y]=\{g\in G_1\ |\ s(g)=x,\ t(g)=y\}.$$ By abuse of notation we denote by $G[x]$ the set of morphisms $G[x,x]$. It is useful for the following to observe that $G[x]$ is actually a group.
\end{notation}
\begin{defin}
A groupoid $\GG$ is {\em transitive} if for all $x,y\in G_0$ there exists $g\in G_1$ such that $s(g)=x$ and $t(g)=y$.
\end{defin}
Some authors define such a groupoid as {\em connected} groupoid, we avoid this term to prevent any misleading with its topological analogue.
Henceforth we assume that a groupoid $\GG$ is always a transitive topological groupoid.
One gets easily the following consequence of this property.

\begin{prop} If $\GG$ is transitive then all the groups $G[x]$ are isomorphic.\end{prop}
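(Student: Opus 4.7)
The plan is to exhibit, for any two units $x,y\in G_0$, an explicit isomorphism $G[x]\to G[y]$ obtained by conjugating with a connecting arrow, whose existence is guaranteed by transitivity. This is the standard ``change of basepoint'' argument from the theory of fundamental groupoids, adapted to the inverse notation adopted in the paper.

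In detail, given $x,y\in G_0$, transitivity provides some $g\in G_1$ with $s(g)=x$ and $t(g)=y$, hence by axiom (iii) also $s(g^{-1})=y$ and $t(g^{-1})=x$. First I would define
\[
\phi_g:G[x]\to G[y],\qquad \phi_g(h)=g^{-1}hg,
\]
and check that the product on the right is actually composable and lies in $G[y]$: with the paper's convention one has $t(g^{-1})=x=s(h)$ and $t(h)=x=s(g)$, so $g^{-1}hg$ is defined, with source $s(g^{-1})=y$ and target $t(g)=y$.

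Next I would verify that $\phi_g$ is a group homomorphism. The key computation is
\[
\phi_g(h_1)\phi_g(h_2)=(g^{-1}h_1g)(g^{-1}h_2g)=g^{-1}h_1(gg^{-1})h_2g=g^{-1}h_1\,u(x)\,h_2g=g^{-1}h_1h_2g=\phi_g(h_1h_2),
\]
using associativity of $m$ together with $gg^{-1}=u(x)$ and the unit axiom $h_1u(x)=h_1$ (which hold after translating axioms (ii) and (iii) into the inverse notation). The map $\phi_g$ is then a bijection because $\phi_{g^{-1}}$ is a two-sided inverse: a direct computation gives $\phi_{g^{-1}}\circ\phi_g(h)=g(g^{-1}hg)g^{-1}=u(x)h\,u(x)=h$, and symmetrically on the other side. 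Finally, since $m$ and the inversion are continuous, both $\phi_g$ and $\phi_{g^{-1}}$ are continuous, so $\phi_g$ is in fact an isomorphism of topological groups.

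There is no real obstacle here; the only thing to watch is that the source/target bookkeeping is done consistently with the ``inverse notation'' convention adopted in the paper, so that all the products $g^{-1}h$, $(g^{-1}h)g$, and $gg^{-1}$ are well-defined in $G_1\times_{r,s}G_1$ before invoking associativity and the unit/inverse axioms.
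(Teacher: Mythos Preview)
Your argument is correct and is exactly the standard conjugation (change-of-basepoint) argument; the paper in fact states this proposition without proof, merely calling it an easy consequence of transitivity, so your proof supplies what the paper omits. Your bookkeeping with the inverse notation is consistent: in that convention $gg^{-1}=u(s(g))=u(x)$ and $h_1u(x)=h_1$, so the homomorphism and inverse computations go through as you wrote them.
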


\section{Ehresmann Groupoids}\label{sec:ehresmann}

We begin this section with a self-contained review of some results due to C. Ehresmann, see \cite{Ehr}, who established an equivalence between $G$-principal bundles and locally trivial groupoids, see Definition \ref{def:loctriv} below.
Henceforth we will assume all topological spaces to be Hausdorff spaces.
\begin{defin}\label{def:loctriv} Let $\GG$ be a groupoid.  $\GG$ is ``locally trivial" if for all $x\in G_0$ there exists an open set $U$ such that for any $x\in U$ there exists a continuous section $\tau :U\to G_1$ of the map $t$ such that $s\circ \tau(y)=x$ for all $y\in U$, or equivalently, such that $s\circ \tau$ is the constant map $x$. \end{defin}
\begin{oss} In the above definition, if one requires $\tau(y)\in G[x,y]$ for all $y\in U$ one obtains an equivalent definition. One can also exchange the roles of $s$ and $t$ obtaining again an equivalent definition.
\end{oss}

\begin{defin}\label{def:fibprin} A principal bundle on a topological space $X$, with structural topological group
$G$, is a map $p:P\to X$ such that \begin{enumerate}
\item There exists a right action on
$P\times G\to P$ which preserves fibers of
$p$ and $G$ freely and transitively acts on $p^{-1}(x)$, for all $x\in X$.
\item The map $p$ identifies $X$ with $P/G$, the quotient space.
\end{enumerate}\end{defin}

\begin{example} The simplest example  of principal bundle on the space $X$ is the trivial bundle
$X\times G$ with $p$ the projection on the first coordinate. If the space $X$ is contractible then all the principal bundles on $X$ are trivial.
\end{example}

One has the following application of the example above.

\begin{prop}\label{prop:loctrivial} If $X$ is locally contractible (namely, all
$x\in X$ has a contractible neighborhood), then all bundles $p: P\to X$ are locally trivial. More precisely for all
$x\in X$ there exists a neighborhood $U\subset X$ and $x\in U$ such that $p|_{p^{-1}(U)}: p^{-1}(U)\to U$ is a trivial $G$-bundle.
In particular there exists a $G$-invariant homeomorphism $p^{-1}(U)\cong U\times G$. \end{prop}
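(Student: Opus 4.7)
The plan is to use the example immediately preceding the proposition as a black box: contractibility of the base forces triviality, so local contractibility should force local triviality by restricting to contractible neighborhoods. So given $x\in X$, I would first invoke local contractibility of $X$ to select an open contractible neighborhood $U$ of $x$.

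Next I would verify that the restriction $p|_{p^{-1}(U)}:p^{-1}(U)\to U$ is itself a principal $G$-bundle in the sense of Definition \ref{def:fibprin}. The right $G$-action on $P$ preserves the fibers of $p$, so it restricts to a right action on the open subset $p^{-1}(U)\subset P$. For each $y\in U$ the fiber over $y$ coincides with the fiber of the original bundle, so the restricted action is free and transitive on $p^{-1}(y)$. Finally, since the $G$-orbits in $p^{-1}(U)$ are exactly the fibers of $p|_{p^{-1}(U)}$, the map $p|_{p^{-1}(U)}$ identifies $U$ with the orbit space $p^{-1}(U)/G$; the quotient topology agrees with the subspace topology from $X$ because $U$ is open. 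Thus all the axioms of a principal bundle are satisfied over $U$.

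Now I would apply the example to this restricted bundle: since $U$ is contractible, $p|_{p^{-1}(U)}$ is a trivial principal $G$-bundle, and hence there is a $G$-equivariant homeomorphism $p^{-1}(U)\cong U\times G$ commuting with the projections onto $U$. This is precisely the conclusion of the proposition.

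The only nontrivial step is the verification that the restriction is again a principal bundle, and within that step the only subtle point is the identification of the quotient topology on $p^{-1}(U)/G$ with the subspace topology on $U\subset X$; openness of $U$ and the fact that $p$ is a quotient map (from condition (2) of Definition \ref{def:fibprin}) dispatch this, so no real obstacle arises. The genuine content of the proposition has already been absorbed into the example it cites.
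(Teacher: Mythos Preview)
Your argument is correct and matches the paper's treatment: the paper states the proposition as ``an application of the example above'' and gives no further proof, and your proposal simply spells out that application by restricting to a contractible neighborhood and invoking the example.
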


\begin{example}
Let $P=\CC^{n+1}\setminus\{0\}$, $X=\CC\mathbb{P}^n$ be the $n$-dimensional complex projective space, let $G$ be $\CC^\ast$, the multiplicative group on the complex space, with the topology induced by $\CC$, and $p:P\to X$, the canonical projection $v\mapsto [v]$, where $[v]$ is the equivalence class of the relationship $v\sim \lambda v$ for $\lambda\in \CC^\ast$.
This is an example of non trivial principal bundle with structural group
$G=\CC^\ast$. The action of
$G$ on $P$ is the multiplication $v\mapsto \lambda v$, for $v\in \CC^{n+1}\setminus\{0\}$ and $\lambda\in \CC^\ast$. \footnote{In this case it doesn't matter which action we decide to assume, right or left, since the group is commutative.}
If $n\geq 1$ this is not a trivial bundle.

\end{example}

From now on we assume that $X$ is a locally contractible space and hence all bundles on $X$ are locally trivial.

\begin{defin}\label{groupoid}For any principal $G$-bundle $\pi:P\to X$ one defines the groupoid $\GG(P)$ as follows. Let
$G_0=X$ and for any $x,y\in G_0$ $$\GG(P)[x,y]=\{f:\pi^{-1}(x)\to \pi^{-1}(y)\ |\ f\mbox{ equivariant}\}.$$
We say that $f$ is equivariant whenever
$f(zg)=f(z)g$.
Equivalently, denoting by $G_1$ the space of arrows of $\GG(P)$, we can define $G_1$ as the set of equivalence classes $(P\times P)/\sim$
under the equivalence relation where $(u,v)\sim(u',v')$ iff there is $g\in G$ such that $u'=ug$ and $v'=vg$.
In other terms $G_1$ is the set of the orbits of the diagonal action of $G$ on $P\times P$, defined by
$(u,v)g=(ug,vg)$. \end{defin}
\begin{notation} We denote by
$[u,v]$ the orbit of $(u,v)$ under the action of $G$. \end{notation}
In the following proposition we describe the structural maps of
$\GG(P)$ as a groupoid.

\begin{prop}\label{prop:GP} $\GG(P)=(G_1,G_0)$ is a groupoid with the following properties.
\begin{enumerate} \item The maps $s,t: G_1\to G_0$ are respectively defined by $s([u,v])=p(u)$ and $t([u,v])=p(v)$, where $p:P\to X=G_0$ is the bundle projection map.
\item The map $u:G_0\to G_1$ is defined by $u(x)=[v,v]$, with $v\in p^{-1}(x)\subset P$ arbitrary.

\item The composition law $G_1\times_{t,s}G_1\to G_1$ is defined by $[u,v][v',w']=[ug,w']$, iff there exists $g\in G$ such that $v'=vg$.
\item The inverse map $G_1\to G_1$ defined by $[u,v]\mapsto [v,u]$.
\item For any fixed $u_0\in P$, let $x_0=p(u_0)$, then there exists  a unique isomorphism $\phi_{u_0}: G[x_0]=\{[u,v]\in G_1\ |\ p(u)=p(v)=x_0\}\to G$ defined by $\phi_{u_0}([w,u_0])= g$ iff $w=u_0g$. For any other choice $u_1=u_0h\in p^{-1}(x_0)$ one has $\phi_{u_1}([u,v])=h^{-1}\phi_{u_0}([u,v])h$ for any $[u,v]\in G[x]$.
\item For any fixed $u_0\in P$, let $x_0=p(u_0)$, then there exists  a unique isomorphism $\psi_{u_0}: G[x_0]=\{[u,v]\in G_1\ |\ p(u)=p(v)=x_0\}\to G$ defined by $\psi_{u_0}([u_0,w])=g^{-1}$ iff $w=u_0g$. For any other choice $u_1=u_0h\in p^{-1}(x_0)$ one has $\psi_{u_1}([u,v])=h^{-1}\psi_{u_0}([u,v])h$ for any $[u,v]\in G[x]$.

\item Fixed $x_0, u_0$ as above, there exists a unique bijective map $\tau_{u_0}:t^{-1}(x_0)\to P$ defined by $\sigma_{u_0}([w,u_0])= w$.
If we interchange $u_0$ with $u_1=u_0h$ then one has $\tau_{u_1}(y)=\tau_{u_0}(y)h$ for any $y\in t^{-1}(x_0)$.
By identifying $G[x_0]=G$ through $\psi_{u_0}$, the map $\tau_{u_0}$ becomes a right $G$-invariant map.
Besides the following holds $p\circ \tau_{u_0}=s$.
\item Analogously, there exists a unique bijective map
$\sigma_{u_0}:s^{-1}(x_0)\to P$ defined by $\sigma_{u_0}([u_0,w])= w$ which, by identification $G[x_0]=G$ through the isomorphism $\phi_{u_0}$, is $G$-invariant, and one obtains $p\circ\sigma_{u_0}=t$.
\end{enumerate}

\end{prop}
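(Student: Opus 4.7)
The proposition is a long list of well-definedness and structural compatibility checks, so my plan is to dispatch each clause by reducing to the two basic features of a principal bundle: the $G$-invariance of the projection $p$ and the freeness plus transitivity of the $G$-action on each fiber.

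First I would handle clauses (1)--(4) concerning the structural maps. For $s$ and $t$, if $(u,v)\sim(u',v')$ with $u'=ug$ and $v'=vg$, then $p(u')=p(u)$ and $p(v')=p(v)$, so both maps are well defined on orbits. For $u:G_0\to G_1$, any two $v,v'\in p^{-1}(x)$ differ by $v'=vg$, so $[v',v']=[vg,vg]=[v,v]$, and the class is independent of the lift. The inverse $[u,v]\mapsto[v,u]$ is well defined because the diagonal action commutes with swapping. For composition, given $[u,v]$ and $[v',w']$ with $p(v)=p(v')$, freeness and transitivity produce a unique $g\in G$ with $v'=vg$, defining $[ug,w']$. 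To see this is representative-independent, replace $(u,v)$ by $(uh,vh)$ and $(v',w')$ by $(v'k,w'k)$: the new $g'$ must satisfy $v'k=(vh)g'$, hence $g'=h^{-1}gk$, and then $[uhg',w'k]=[ugk,w'k]=[ug,w']$. Associativity and the unit axioms follow by similar manipulations of representatives.

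Next I would prove (5) and (6). For $\phi_{u_0}$, the key point is that a class in $G[x_0]$ has a \emph{unique} representative of the form $(w,u_0)$: if $(wh,u_0h)=(w',u_0)$ then $u_0h=u_0$ forces $h=e$ by freeness. Thus $\phi_{u_0}$ is a well-defined bijection onto $G$. Multiplicativity is the core calculation: with $w_i=u_0g_i$, composability of $[w_1,u_0]$ and $[w_2,u_0]$ requires $w_2=u_0\cdot g$, forcing $g=g_2$, so the composition is $[w_1g_2,u_0]$; since $w_1g_2=u_0g_1g_2$, we get $\phi_{u_0}([w_1,u_0][w_2,u_0])=g_1g_2$. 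For the conjugation formula, rewrite $[w,u_0]=[wh,u_0h]=[wh,u_1]$ and solve $wh=u_1\phi_{u_1}([w,u_0])=u_0h\phi_{u_1}([w,u_0])$; since $w=u_0\phi_{u_0}([w,u_0])$, this yields $\phi_{u_1}=h^{-1}\phi_{u_0}h$. The treatment of $\psi_{u_0}$ in (6) is symmetric; the inverse ($g^{-1}$ rather than $g$) appears because the fixed element now sits in the first slot, while the conventions of (3) for composition move the free $G$-variable to the right.

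Finally, for (7) and (8), I would observe that $\tau_{u_0}$ and $\sigma_{u_0}$ are also defined via unique representatives: every class in $t^{-1}(x_0)$ (resp. $s^{-1}(x_0)$) has a unique representative with $u_0$ in the second (resp. first) slot, by the same freeness argument. Bijectivity onto $P$ follows from transitivity, the identities $p\circ\tau_{u_0}=s$ and $p\circ\sigma_{u_0}=t$ are immediate, and the base-change formulas $\tau_{u_1}(y)=\tau_{u_0}(y)h$ and its analogue for $\sigma$ follow by rewriting representatives using $u_1=u_0h$. $G$-equivariance, after identifying $G[x_0]\cong G$ via $\psi_{u_0}$ or $\phi_{u_0}$, is a short computation using the composition law from (3). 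I expect the main bookkeeping obstacle to be the paper's reverse convention $s(gh)=s(g)$, $t(gh)=t(h)$: under this convention one must be careful to confirm that $\phi_{u_0}$ is a genuine homomorphism (not an antihomomorphism) and that the conjugation comes out as $h^{-1}(\cdot)h$ rather than $h(\cdot)h^{-1}$, and the sign asymmetry between $\phi$ and $\psi$ is a direct consequence of this choice.
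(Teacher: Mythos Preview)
Your proposal is correct and follows essentially the same approach as the paper: both proofs verify well-definedness of the structural maps via representative-independence, check associativity by choosing aligned representatives, establish that $\phi_{u_0}$ and $\psi_{u_0}$ are isomorphisms through the unique representative $[u_0g,u_0]$ (respectively $[u_0,u_0g]$), and deduce the conjugation formulas and the properties of $\tau_{u_0},\sigma_{u_0}$ by the same direct computations. If anything, your treatment of the well-definedness of composition (handling simultaneous change of both representatives) is slightly more explicit than the paper's, which checks each factor separately.
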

\begin{proof}

The maps $s,t$  are well defined since $p(zg)=p(z)$ for any $z\in P$ and $g\in G$.
The map $u:G_0\to G_1$ is well defined as well, since a different choice of $v'\in p^{-1}(x)$ yields $[v',v']=[v,v]$.
In order to prove that the product is well defined, let us consider $[u,v]=[uh,vh]$,
therefore, for $v'=vg$, we get $[uh,vh][v',w]=[uh,vh][(vh)h^{-1}g,w]=[uh,wg^{-1}h]=[u,wg^{-1}]$. In a similar way if $[v'',w'']=[v',w]$ then $v''=v'h=vgh$ e $w'=wh$, which in turns implies $[u,v][v'',w']=[u, w'h^{-1}g^{-1}]=[u,wg^{-1}]$.
The inverse map is, obviously, well defined.
The properties to be a groupoid are easily verified. For example, since the product is well defined, we can deduce associativity from the following fact
$$([u,v][v,w])[w,z]=[u,z]=[u,v]([v,w][w,z]).$$

\paragraph{(5)} The map $\phi_{u_0}$ is bijective and well defined, since it is the unique representation of $[u,v]\in G[x]$ as $[u,v]=[w,u_0]=[u_0g,u_0]$.
Moreover $\phi_{u_0}$ is a group homomorphism. Indeed, one has
$$\phi_{u_0}([u_0g,u_0][u_0h,u_0])=\psi_{u_0}([u_0gh,u_0])=gh=\phi_{u_0}([u_0g,u_0])\phi_{u_0}([u_0h,u_0]).$$
Since $\phi_{u_1}([u,v])=\phi_{u_1}([wh,u_0h])=\phi_{u_1}([u_0gh,u_1])=\phi_{u_1}([u_1h^{-1}gh,u_1])=h^{-1}gh=h^{-1}\phi_{u_0}([u,v])h$.
\paragraph{(6)} Similar to the proof of (5).
\paragraph{(7)} The map $\tau_{u_0}: t^{-1}(x_0)\to P$ is bijective and well defined. We observe that again by replacing $u_0$ with $u_1=u_0h$ we get $\tau_{u_1}([w,u_0])=\tau_{u_1}([wh,u_1])=wh=\tau_{u_0}([w,u_0])h$.  Let $[u,v]\in G[x]$,
we may assume $[u,v]=[u_0,u_0g]$, therefore we can identify this element with $g^{-1}\in G$ trough $\psi_{u_0}$. We thus obtain \begin{eqnarray*}\tau_{u_0}([w,u_0]\cdot g^{-1})=\tau_{u_0}([w,u_0][u_0,u_0g])=\tau_{u_0}([w,u_0g])\\
=\tau_{u_0}[wg^{-1},u_0]=wg^{-1}=\tau_{u_0}([w,u_0]) g^{-1}.\end{eqnarray*}  Finally $p(\tau_{u_0}([w,u_0]))=p(w)=s([w,u_0])$.
\paragraph{(8)} Analogous to the proof of (7).
\end{proof}

\begin{oss} Let us assume $X=G_0=\{1\}$ and $P=G\to X$, with $p$ equal to the constant map $G\to\{1\}$. Then
$\GG(P)=G\times G/\sim$ is the collection of the orbits $[a,b]$ with $a,b\in G$, by choosing as canonical representants $[ab^{-1},1]$.
If we proceed in analogy with the construction of Proposition \ref{prop:GP} we get  $\GG(P)\cong G$, using the bijection $[g,1]\mapsto g$.
 \end{oss}
\begin{proof}
In order to verify that the map $[g,1]\mapsto g$ is an isomorphism, it is sufficient to consider the following calculation $[g,1][h,1]=[g,1][1,h^{-1}]=[g,h^{-1}]=[gh,1]$.\end{proof}

Let $G_0$ be a connected and locally simply connected Hausdorff space. Under the above assumptions the following holds.

\begin{thm}[Ehresmann]\label{thm:ehres1} Let $P\to X$  be a $G$-principal bundle then there exists a natural topology on $\GG(P)$ induced by $P$ such that $\GG(P)$
is a locally trivial topological groupoid.
\end{thm}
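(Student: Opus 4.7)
The plan is to equip $G_1=(P\times P)/\sim$ with the quotient topology from $P\times P$ (with $\sim$ the diagonal right $G$-action), retain the given topology on $G_0=X$, and verify the topological groupoid axioms together with local triviality by reducing to the trivial bundle on suitable neighborhoods. Continuity of $s$ and $t$ is immediate, since $(u,v)\mapsto p(u)$ and $(u,v)\mapsto p(v)$ on $P\times P$ are continuous and invariant under the diagonal action. The inversion is induced by the swap $(u,v)\mapsto(v,u)$, which is $G$-equivariant, hence is continuous on $G_1$. For the unit $u:G_0\to G_1$, continuity is checked locally: by Proposition \ref{prop:loctrivial}, each $x_0\in X$ has a neighborhood $U$ with a continuous section $\gamma:U\to P$ of $p$, and on $U$ one has $u(x)=[\gamma(x),\gamma(x)]$, the composition of $x\mapsto(\gamma(x),\gamma(x))$ with the quotient map.

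The next step is to produce a local model for $G_1$ which makes multiplication and Hausdorffness transparent. On an open $U\subset X$ over which $p^{-1}(U)\cong U\times G$ (with its standard right $G$-action), the restriction of $G_1$ to $(s,t)^{-1}(U\times U)$ is the quotient of $(U\times G)\times(U\times G)$ by the diagonal right $G$-action, and the map
\[
[(x,a),(y,b)]\longmapsto(x,y,ba^{-1})
\]
descends to a homeomorphism onto $U\times U\times G$ (its inverse $(x,y,g)\mapsto[(x,e),(y,g)]$ is manifestly continuous). In these coordinates, multiplication on composable pairs reads $((x,y,\alpha),(y,z,\beta))\mapsto(x,z,\beta\alpha)$, which is evidently continuous. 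The local product description also shows that $G_1$ is locally Hausdorff; two distinct global points of $G_1$ whose $(s,t)$-images differ can be separated by pulling back disjoint opens in $X\times X$, while two points with equal $(s,t)$-image can be separated inside a single bitrivializing chart.

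For local triviality in the sense of Definition \ref{def:loctriv}, fix $x_0\in G_0$, choose $U$ and $\gamma$ as above, and set $\tau(y)=[\gamma(x_0),\gamma(y)]$. Then $\tau:U\to G_1$ is the composition of $y\mapsto(\gamma(x_0),\gamma(y))$ with the quotient map, hence continuous; moreover $t\circ\tau=\ide_{U}$ while $s\circ\tau$ is the constant map with value $x_0$, so $\tau$ is the desired local section of $t$ with constant source.

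The one delicate point is the continuity of the partially defined multiplication $m$, because its global description requires rechoosing the second representative of the first factor so as to match the first representative of the second. I would sidestep this difficulty by working inside the local trivializations of the second paragraph, where multiplication reduces to the group law in the $G$-coordinate and continuity is immediate; since continuity on $G_1\times_{t,s}G_1$ is a local property and the bitrivializing charts cover the fiber product, global continuity follows.
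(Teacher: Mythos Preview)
Your argument is correct and follows essentially the same route as the paper: endow $G_1=(P\times P)/\sim$ with the quotient topology, use local trivializations of $P$ to identify the arrow space over a trivializing open set with a product $U\times U\times G$ (the paper writes this as a bijection $\psi:U_0\times V_0\times W_1W_1^{-1}\to\pi(U\times V)$, $(x,y,r)\mapsto[x'r,y']$, which is the same chart with the opposite convention on the $G$-coordinate and a neighborhood basis in $G$ rather than all of $G$), and read off local triviality from a local section of $p$. The paper leaves the continuity of the structure maps and Hausdorffness to ``routine arguments'', which you have supplied; conversely, the paper is explicit that the projection $\pi$ is open (since $\pi^{-1}(\pi(U\times V))=\bigcup_{g}(U\times V)g$), a fact you use implicitly when identifying the subspace topology on $(s,t)^{-1}(U\times U)$ with the quotient of $p^{-1}(U)\times p^{-1}(U)$.
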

\begin{proof}
In order to define in a meaningful way a topology on the quotient $(P\times P)/ \sim$ we use the second version of the Definition \ref{groupoid}.

Assuming $\pi:P\times P\to \GG(P)$ is the canonical projection, a set $\pi^{-1}(\AA)$ is an open set iff, given an arbitrary $(u,v)\in \pi^{-1}(\AA)$, there exist two open neighborhoods $U,V$ respectively of $u,v\in P$, such that
$U\times V\subset \pi^{-1}(\AA)$,
 which implies $\pi(U\times V)\subset \AA$, and in particular $\pi^{-1}(\pi(U\times V))\subset \pi^{-1}(\AA)$.

  Since $\pi^{-1}(\pi(U\times V))=\{(u'g,v'g)\ |\ (u',v')\in U\times V,\ g\in G\}=\bigcup_{g\in G}(U\times V)g$, $\pi^{-1}(\pi(U\times V))$ is an open set.
  Hence the collection of sets as $\pi(U\times V)$ is a basis of $\GG(P)$.
  Observe that, since $U$ and $V$ can be chosen in an arbitrary basis of open sets, by Proposition \ref{prop:loctrivial} they, in particular, can be of the following type $U\cong U_0\times W_1$ and $V\cong V_0\times W_1$, with $U_0$ and $V_0$ contractible neighborhood of $x=\pi(u)$ and $y=\pi(v)$ respectively, and $W_1$ in a basis of the Identity $1\in G$.
  Now we are ready to construct the following sections
  $U_0\to U\cong U\times W_1$ and $V_0\to V\cong U\times W_1$, both fixed by the map $x\mapsto (x,1)$ created using the inverses of the above isomorphisms.
  \vskip2mm
  \begin{notation} We denote by $U'_0, V'_0$
  the images of $U_0$ and $V_0$ inside $U$ and $V$ through the above sections, and $x'$, $y'$ the images of elements $x\in U_0$ and $y\in V_0$, respectively.
  \end{notation}

  \paragraph{} Note that we can identify $U=U'_0W_1$ and $V=V'_0W_1$.
Then \begin{eqnarray*}U\times V&=&\{(x'g,y'h)\ |\ x'\in U'_0,\ y'\in V'_0,\ g,h\in W_1\},\\
\pi^{-1}(\pi(U\times V))&=&\{(x'gk,y'hk)\ |\ x'\in U_0',\ y'\in V'_0,\ g,h\in W_1,k\in G\}.\end{eqnarray*}

 Since $[x'gk,y'hk]=[x'gh^{-1},y']$, we can find the bijection $U_0\times V_0\times W_1W_1^{-1}\cong \pi(U\times V)$, which can be defined explicitly as follows.
\begin{equation}\label{eq:psi}
\psi: U_0\times V_0\times W_1W_1^{-1}\to \pi(U\times V),\quad\psi(x,y,r)=[x'r,y'].\end{equation}

 The map $\psi$ is clearly surjective. Moreover if
 $[x'r,y']=[x''s,y'']$ then $y'=y''g$ and $x'r=x''sg$ for some $g\in G$, but, since $y',y''\in V_0'$, with $V'_0$ local section of $p:P\to X$ onto $V_0$,
 we deduce $g=1$, hence $y'=y''$ and $x'r=x''s$, therefore $x'=x''sr^{-1}$. Since $U'_0$ is a section of $p:P\to X$ onto $U_0$, this yields $s=r$, hence $\psi$ is injective.
 Choosing $U,V$ as above then we obtain a concrete description of an open basis of $\GG(P)$.

Finally we fix arbitrarily
$x\in X=G_0$ and $[u,u]\in\GG(P)$ with $p(u)=x$.
In this case for the above construction we can choose
$U=V\cong U_0\times W_1$.

 Therefore we can define the section $\sigma:U_0\to U_0\times U_0\times W_1W_1^{-1}\cong \pi(U\times U)$  of the target map $t$ through $x\mapsto (x,x,1)$ on the open set $U_0\subset G_0$. This section has the properties requested by Definition \ref{def:loctriv}.
The continuity properties of the structural maps are obtained by routine arguments, then one shows that
$\GG(P)$ is a topological groupoid.
\end{proof}

Conversely, let
$\GG$ be a locally trivial topological groupoid, then the following holds.
\begin{thm}[Ehresmann]\label{thm:fibratogpd} Let $\GG$ a locally trivial groupoid with $G_0$ locally contractible, then for any $x\in G_0$ the source map $s:P=t^{-1}(x)\to G_0$
is a right principal bundle with structural group $G=G[x]$ and $\GG\cong\GG(P)$.

Similarly, the target map
$t:P=s^{-1}(x)\to G_0$ is a left principal bundle with structural group $G=G[x]$. \end{thm}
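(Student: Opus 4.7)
The plan is to produce, for the fixed $x \in G_0$, a principal $G[x]$-bundle structure on $s: P \to G_0$ with $P = t^{-1}(x)$, and then to exhibit an explicit isomorphism of topological groupoids $\Psi: \GG \to \GG(P)$. The second statement, about $t: s^{-1}(x) \to G_0$ as a \emph{left} bundle, will follow by the symmetric argument.

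\textbf{Principal bundle structure.} First I would define the right action of $G := G[x]$ on $P$ by groupoid multiplication $(h, g) \mapsto hg$; the product is defined since $t(h) = x = s(g)$, it lands in $P$ since $t(hg) = t(g) = x$, and it preserves $s$-fibers since $s(hg) = s(h)$. Freeness is immediate from the groupoid axioms, and transitivity on each fiber $s|_P^{-1}(y) = G[y, x]$ holds because for any $h, h' \in G[y, x]$ the element $g := h^{-1}h' \in G$ satisfies $hg = h'$. For local triviality near $y_0 \in G_0$, I would invoke local triviality of $\GG$ to obtain an open $U \ni y_0$ and a continuous section $\tau: U \to G_1$ of $t$ with $\tau(z) \in G[w, z]$ for some fixed $w \in G_0$; transitivity of $\GG$ supplies $k \in G[x, w]$, and the map $U \times G \to s|_P^{-1}(U)$ sending $(z, g) \mapsto (k \tau(z))^{-1} g$ is a $G$-equivariant homeomorphism with continuous inverse $h \mapsto (s(h),\, k\tau(s(h))h)$. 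This exhibits $s: P \to G_0$ as a locally trivial right principal $G$-bundle.

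\textbf{Groupoid isomorphism.} For $f \in G_1$ with $s(f) = a$, I would pick any $u \in G[a, x] \subset P$ and set $\Psi(f) := [u, f^{-1}u] \in \GG(P)$; independence from the choice of $u$ is immediate from $(ug, f^{-1}(ug)) = (ug, (f^{-1}u)g)$. A direct calculation using $f_1^{-1}f_1 = u(t(f_1))$, together with the product rule $[u,v][v',w] = [ug, w]$ when $v' = vg$ from Proposition \ref{prop:GP}(3), yields $\Psi(f_1 f_2) = \Psi(f_1)\Psi(f_2)$; the identities $\Psi(u(a)) = [u, u]$ and $\Psi(f^{-1}) = \Psi(f)^{-1}$ are read off from parts (2) and (4) of the same proposition. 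The assignment $[u, v] \mapsto uv^{-1}$ is well defined, since $(ug)(vg)^{-1} = uv^{-1}$, and it provides a two-sided inverse to $\Psi$.

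\textbf{Continuity and main obstacle.} The projection $q: P \times P \to \GG(P)$ is open, being the quotient by the free diagonal $G$-action, so continuity of $\Psi^{-1}$ reduces to continuity of its lift $(u, v) \mapsto uv^{-1}$, which is clear from the continuity of multiplication and inversion in $\GG$. For continuity of $\Psi$ itself I would use a continuous local section $\sigma: U \to P$ of $s|_P$, furnished by the trivialization constructed above: near any $f$ with $s(f) \in U$ one has $\Psi(f') = q\bigl(\sigma(s(f')),\, (f')^{-1}\sigma(s(f'))\bigr)$, a composition of continuous maps. The main technical obstacle is to get $\Psi$ and its inverse continuous through the quotient topology on $\GG(P)$, and it is precisely the local sections produced by local triviality of $\GG$ that resolve this point; the left-handed version of the theorem is handled by the mirror argument with left multiplication by $G[x]$ on $s^{-1}(x)$.
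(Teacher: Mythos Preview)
Your argument is correct and, in fact, considerably more complete than what the paper offers. The paper's own proof of this theorem is the single sentence ``It follows from (6) of Proposition~\ref{prop:GP}.'' That reference points to the isomorphism $\psi_{u_0}:G[x_0]\to G$ (and implicitly to parts (7) and (8), which identify $t^{-1}(x_0)$ and $s^{-1}(x_0)$ with $P$); the idea is that since those maps recover the bundle data from $\GG(P)$, the construction $P\mapsto \GG(P)$ is invertible. But the paper does not spell out the inverse construction, the local triviality of $s:t^{-1}(x)\to G_0$, or the continuity of the groupoid isomorphism---all of which you supply.

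Your route is the natural one and is compatible with the paper's: you build the right $G[x]$-action on $t^{-1}(x)$ directly from groupoid multiplication, extract local trivializations from the local-triviality sections $\tau$ of Definition~\ref{def:loctriv} (adjusted by a transitivity element $k$), and then write down the explicit isomorphism $\Psi(f)=[u,f^{-1}u]$ with inverse $[u,v]\mapsto uv^{-1}$. The continuity arguments you give are the standard ones, and the openness of the quotient map $q:P\times P\to \GG(P)$ that you invoke is exactly what is established in the proof of Theorem~\ref{thm:ehres1} (where it is shown that $\pi^{-1}(\pi(U\times V))=\bigcup_{g}(U\times V)g$ is open). So your proof and the paper's implicit argument agree in substance; you have simply carried out the verification in full.
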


\begin{proof} It follows from (6) of Proposition \ref{prop:GP}.\end{proof}

\section{Universal compactifications of principal bundles}\label{UnivComp}

We begin with a brief review of a compactification of topological groups $G$ endowed with a universal property with respect to actions of $G$ on compact spaces, widely studied in the theory of Universal Minimal Flows, see \cite{Aus} and \cite{usp} for a useful survey. Then we will explain how to generalize the construction to principal bundles.
\begin{notation}
Let $G$ be a topological group, we recall that the $C^\ast$-algebra $R_G=RUC^b(G)$ of {\em right uniformly continuous} functions, is defined as the algebra of  bounded continuous functions that satisfy the following $$\forall\ \epsilon>0\ \exists\ \VV:\ \forall\ y\in G\ \mbox{ si ha }|f(y)-f(gy)|<\epsilon,\ \forall\ g\in \VV,$$
where the set $\VV$ is taken in a system of open neighborhoods of the identity element $1\in G$.\end{notation}
Under pointwise addition, multiplication and conjugation, and with the sup norm
$R_G$ is an abelian $C^\ast$-algebra.
The Gelfand duality associates to $R_G$ a compact space, $S(G)$, widely investigated in \cite{usp} and \cite{KPT}.
Symmetrically we can define, by substituting the inequality $|f(y)-f(gy)|<\epsilon$ with $|f(y)-f(yg)|<\epsilon$,
$L_G=LUC^b(G)$, the algebra of {\em left uniformly continuous}.

By a natural isomorphism between $R_G$ and $L_G$, the latter associates to the same compact space $S(G)$.
In the articles \cite{usp} and \cite{KPT} the authors need to assume $G$ metrizable, with right invariant compatible metric, in order to separate the elements of $G$. In the following we will consider $G$ with the same properties and $\GG(P)$ the topological groupoid associated to a $G$-principal bundle as defined in Section \ref{sec:ehresmann} with $X$ locally compact and locally contractible.
We will need to assume $X$ locally compact in order to be able to exploit the Gelfand construction.
Hence we will be able to refer to the canonical locally compact space associated to the $C^\ast$-algebra $C_0(X)$, the algebra of continuous functions $f:X\to \CC$, endowed with sup-norm and compact support. Indeed, the topology of a locally compact space $X$ is induced by $C_0(X)$ in the sense that closed sets are the ideals of $C_0(X)$.

Recall that Gelfand construction provides a categorial duality between the category of compact spaces and the category of unital commutative $C^\ast$-algebras (see \cite{murphy}, Theorems 2.1.10 and 2.1.15).
In the sequel we will need the one directional functorial version of Gelfand construction dealing with locally compact Hausdorff spaces instead of compact ones. Indeed,
the space of characters $\Omega(A)$ of a commutative $C^\ast$-algebra $A$ is locally compact, $A=C_0(\Omega(A))$, see \cite{murphy} Theorems 1.3.5 e 2.1.10, and an homomorphism
$\phi^\ast:A\to B$ associates to a continuous map $\phi:\Omega(B)\to \Omega(A)$.
In particular the following holds:
\begin{prop}\label{prop:semigelf} Let $A$ and $B$ be two isomorphic commutative $C^\ast$-algebras then the related locally compact spaces $\Omega(A)$ and $\Omega(B)$ are homeomorphic.
\end{prop}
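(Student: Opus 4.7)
The plan is to exhibit the homeomorphism directly as the pullback along a given isomorphism, using only the standard weak-$\ast$ description of the character space $\Omega(A)$ of a commutative $C^\ast$-algebra $A$ (as a subset of the dual $A^\ast$, see \cite{murphy} 1.3). This is essentially the ``morphism part'' of the functor $\Omega(\cdot)$ that the authors explicitly invoked just before the statement.

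Concretely, let $\phi:A\to B$ be a $C^\ast$-algebra isomorphism. I would define
\[
\phi^\ast:\Omega(B)\longrightarrow\Omega(A),\qquad \phi^\ast(\chi)=\chi\circ\phi,
\]
and verify: (i) $\phi^\ast(\chi)$ is indeed a nonzero multiplicative linear functional on $A$ (because $\phi$ is a $\ast$-homomorphism and $\chi$ is), so $\phi^\ast$ is well defined; (ii) $\phi^\ast$ is a bijection, with inverse $(\phi^{-1})^\ast$, since $(\phi^{-1})^\ast\circ\phi^\ast(\chi)=\chi\circ\phi\circ\phi^{-1}=\chi$ and symmetrically; (iii) $\phi^\ast$ is continuous in the weak-$\ast$ topologies on $\Omega(B)$ and $\Omega(A)$, because for each fixed $a\in A$ the evaluation $\chi\mapsto \phi^\ast(\chi)(a)=\chi(\phi(a))$ is weak-$\ast$ continuous on $\Omega(B)$ by definition of that topology, and these evaluation maps generate the topology of $\Omega(A)$. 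The same argument applied to $\phi^{-1}$ gives continuity of the inverse, so $\phi^\ast$ is a homeomorphism.

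I do not expect any genuine obstacle: the only point requiring a second look is that the characters $\chi$ here are those of a (possibly nonunital) commutative $C^\ast$-algebra, where $\Omega(A)$ is locally compact rather than compact (\cite{murphy}, Theorem 1.3.5). One must check that $\phi^\ast(\chi)\neq 0$, which is immediate because $\phi$ is surjective: if $\chi\circ\phi$ vanished on all of $A$, then $\chi$ would vanish on all of $B=\phi(A)$, contradicting $\chi\in\Omega(B)$. After this observation, the verification is formal and essentially the content of the functoriality of Gelfand duality already recalled by the authors just before the statement.
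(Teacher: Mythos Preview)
Your proof is correct. In fact the paper gives no separate proof of this proposition: it is stated as an immediate consequence of the functoriality of the Gelfand construction described in the paragraph just before it (a $\ast$-homomorphism $\phi^\ast:A\to B$ induces a continuous map $\phi:\Omega(B)\to\Omega(A)$), and your argument simply makes that functoriality explicit in the case of an isomorphism, exactly as intended.
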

Observe that the existence of a continuous map $f:X\to Y$ between locally compact spaces does not necessary implies the existence of an homomorphism $f^\ast:C_0(Y)\to C_0(X)$, since this happens only when the map $f$ is proper.

\begin{notation} Fix $x_0\in X$ and $u_0\in P$ such that $p(u_0)=x_0$. Let us consider the {\em left} $G$-principal bundle $P=s^{-1}(x_0)$, already introduced by
 Theorem \ref{thm:fibratogpd}.\end{notation}

 Recall that in this case the canonical projection $p:P\to X$ is identified with the map $t:s^{-1}(x_0)\to X$.
 Let us notice that $P$ can be described as the set $s^{-1}(x_0)=\{[u_0,y']\ |\ y'\in P\}$ and for every element $\tau\in s^{-1}(x_0)$ the representation $\tau=[u_0,y']$ is unique.
 In this representation, the map $t:s^{-1}(x_0)\to X$ is clearly defined by $t([u_0,y']=p(y')$.

Next definition generalizes $RUC_0^b(G)$ to the bundle $P$.

\begin{defin}\label{def:rucbGP} Denote $R_{\PP,u_0}$ the set of functions $f:s^{-1}(x_0)\to\CC$ with the following properties
\begin{enumerate}
\item $f$ bounded, continuous and $f= 0$ outside $t^{-1}(K)$, where $K$ is a compact subset of $X$.
\item For all local section $x\mapsto x'$ of $p:P\to X$ defined on the open set $U\subset X$ and for every $\epsilon>0$ there exists an open neighbourhood $\VV$ of $1\in G$ such that for every $g\in \VV$, for every $h\in G$ and for every $x\in U$ we have $|f([u_0gh,x'])-f([u_0h,x'])|<\epsilon$.
\end{enumerate}
\end{defin}
\begin{oss}In the above definition the open set $\VV$ is independent from the choice of the section $x\to x'$.
\end{oss}
\begin{proof}Let us assume $x\mapsto x''$ is a different section of $p$ on $U$, then one can write $x'$ as $x''k(x)$, with $k:U\to G$ selected continuous function.
 Hence $[u_0h,x'']=[u_0h, x'k^{-1}(x)]=[u_0hk(x),x']$, then a function $f$, such as one of the above definition, is such that $|f([u_0ghk(x),x'])-f([u_0hk(x),x'])|<\epsilon$, for any $x\in U$ and $g\in\VV$, by the arbitrariness of $h$ in the Definition \ref{def:rucbGP}. Therefore for any $g\in\VV$ we have
$|f([u_0gh,x'')-f([u_0h,x''])|<\epsilon$. \end{proof}
We will use the above argument to replace in Definition \ref{def:rucbGP} the sentence ``for all sections" with ``there exists a section"
This modification has fruitful consequences, indeed since there exists a local section $p:P\to X$ one can introduce a more classical local notation for functions $f\in R_{\PP,u_0}$.

\begin{prop}\label{prop:RPloc} Let $U\subset X$ be an open set and assume there exists on $U$ a section $x\mapsto x'$ of the projection $p:P\to X$.
As above we identify $P\cong s^{-1}(x_0)$ as a left $G$-bundle.
Then the map $\psi': U\times G\to p^{-1}(U)$ defined by $(x,h)\mapsto [u_0h,x']$ is an homeomorphism.
In particular the restrictions of all functions $f:P\to \CC$ to $p^{-1}(U)$ can be identified with the functions $F:U\times G\to \CC$ by putting $F(x,h)=f([u_0h,x'])$.
Moreover the condition (2) of Definition \ref{def:rucbGP} can be rephrased using the following inequality $|F(x,gh)-F(x,h)|<\epsilon$.\end{prop}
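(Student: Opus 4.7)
The plan is to establish the homeomorphism $\psi'$ first, from which the two subsequent assertions follow essentially by unraveling definitions. Bijectivity will come from the normal form for elements of $s^{-1}(x_0)$; the topology will be controlled by matching the basic open sets of $\GG(P)$ produced in Theorem \ref{thm:ehres1} with product neighborhoods in $U \times G$, after using the right $G$-action on $P$ to translate the local trivialization around $1 \in G$ to a neighborhood of an arbitrary $h \in G$.

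For bijectivity, every element of $p^{-1}(U) \subset s^{-1}(x_0)$ has a unique representation $[u_0, z]$ with $z \in p^{-1}(x)$ for some $x \in U$; freeness and transitivity of the $G$-action on the fibre of the original bundle provide a unique $g \in G$ with $z = x' g$, so $[u_0, z] = [u_0 g^{-1}, x']$, giving $(x, g^{-1})$ as the unique $\psi'$-preimage. For the topology, I would fix $(x, h) \in U \times G$ and compare neighborhood bases at $(x, h)$ and at $\tau = [u_0 h, x']$. A basic neighborhood of $\tau$ in $\GG(P)$ has the form $\pi(U_\tau \times V_\tau)$ for product-form open sets $U_\tau \ni u_0 h$ and $V_\tau \ni x'$ in $P$; using the right action, I replace the standard section $U_0'$ over a contractible $U_0 \ni x_0$ by its translate $U_0' h$ to center things around $u_0 h$. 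The explicit parameterization of \eqref{eq:psi} then specializes, after restricting the base coordinate to $x_0$ (which is exactly what the condition $s = x_0$ demands), to a map $(y, r) \mapsto [u_0 h r, y']$ on $V_0 \times W_1 W_1^{-1}$; this corresponds under $\psi'$ to the rectangular neighborhood $V_0 \times h W_1 W_1^{-1}$ of $(x, h)$. Matching fundamental systems of open sets then identify $\psi'$ as a homeomorphism.

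The second assertion, that every function $f$ on $p^{-1}(U)$ pulls back to $F(x, h) = f([u_0 h, x'])$ on $U \times G$, is immediate from the homeomorphism. The third is a direct substitution: condition (2) of Definition \ref{def:rucbGP} reads $|f([u_0 g h, x']) - f([u_0 h, x'])| < \epsilon$, which is exactly $|F(x, g h) - F(x, h)| < \epsilon$. The only genuine subtlety is organizing the translation by an arbitrary $h \in G$ of the local-trivialization data from Theorem \ref{thm:ehres1}, which was phrased around a neighborhood of $1 \in G$; but this uses only the continuity of the right action of $G$ on $P$ and introduces no new content.
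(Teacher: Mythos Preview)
Your proposal is correct and follows the same idea as the paper's proof, which is extremely terse: the paper simply asserts that $\psi'$ is a ``restriction'' of the homeomorphism $\psi$ of \eqref{eq:psi} and leaves the rest as a straight calculation. Your write-up makes precise what this one-liner leaves implicit, in particular the point that the parameterization \eqref{eq:psi} was set up only around a neighborhood $W_1W_1^{-1}$ of $1\in G$, so that to cover a neighborhood of an arbitrary $(x,h)\in U\times G$ one must translate the local section over $x_0$ by $h$ before invoking \eqref{eq:psi}; this is exactly the step you flag and carry out, and it is the only place where any care is needed.
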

\begin{proof} The map $\psi'$ is a restriction of the homeomorphism $\psi$ defined by (\ref{eq:psi}) in the proof of Theorem \ref{thm:ehres1}.
This in turns implies that
$\psi'$ is an homeomorphism and, by a straight calculation, the characterization of the condition (2) of Definition \ref{def:rucbGP}. \end{proof}
Using the following lemma we can construct many useful examples of functions in $R_{\PP,u_0}$.

\begin{lm}\label{lm:RG}
 Let $\eta: G_0\to \CC$ be a continuous function with compact support $H\subset U$, with $U$ open set of $X$ on which there exists a section of $p:P\to X$.
 Moreover let $\theta:G\to\CC$ be a function in $R_G$, then, using notations of Proposition \ref{prop:RPloc}, the function $F:U\times G\to \CC$ defined by $F(x,h)=\eta(x)\theta(h)$ belongs to $R_{\PP,u_0}$.
\end{lm}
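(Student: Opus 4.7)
The plan is to verify the three defining properties of $R_{\PP,u_0}$ for the function $F$ (equivalently for its avatar $f:P\to\CC$ obtained via the homeomorphism $\psi'$ of Proposition \ref{prop:RPloc}, extended by zero outside $p^{-1}(U)$).

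First I would address continuity and the support condition. Let $H=\operatorname{supp}(\eta)\subset U$. Transporting $F$ through $\psi'$ gives a continuous function on the open set $p^{-1}(U)\subset P$, which vanishes outside $p^{-1}(H)$. To patch this into a global continuous function $f$ on $P$, I would use the open cover $\{p^{-1}(U),\;P\setminus p^{-1}(H)\}$ (the latter is open because $H$ is compact, hence closed in $X$). On $p^{-1}(U)$ set $f=F\circ(\psi')^{-1}$ and on $P\setminus p^{-1}(H)$ set $f=0$; these agree on the overlap since $F$ itself vanishes where $\eta$ does. Thus $f$ is continuous, bounded by $\|\eta\|_\infty\|\theta\|_\infty$, and vanishes outside $t^{-1}(H)$ (recall $t=p$ for the left bundle $s^{-1}(x_0)$), with compact $K=H$. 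This establishes property (1) of Definition \ref{def:rucbGP}.

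Next I would verify the uniform continuity condition (2), which, thanks to the remark following Definition \ref{def:rucbGP} and Proposition \ref{prop:RPloc}, reduces to showing that for every $\epsilon>0$ there exists a neighborhood $\VV$ of $1\in G$ with $|F(x,gh)-F(x,h)|<\epsilon$ for all $x\in U$, $h\in G$, $g\in\VV$. This factors as
\[
|F(x,gh)-F(x,h)|=|\eta(x)|\cdot|\theta(gh)-\theta(h)|.
\]
Let $M=\|\eta\|_\infty$; if $M=0$ the claim is trivial, otherwise apply $\theta\in R_G$ to find $\VV$ such that $|\theta(gh)-\theta(h)|<\epsilon/M$ for all $g\in\VV$ and all $h\in G$. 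The estimate then follows uniformly in $x$.

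The proof is essentially a bookkeeping exercise, so there is no real obstacle; the only point requiring minor care is the continuity of the extension by zero, which I would handle via the open cover argument above rather than arguing pointwise. Note that the compact support of $\eta$ is used twice: once to make $p^{-1}(H)$ a closed set so the extension is continuous, and once to provide the compact set $K$ witnessing condition~(1).
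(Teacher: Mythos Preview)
Your proof is correct and follows essentially the same approach as the paper: property~(1) is checked using the compact support of $\eta$, and property~(2) via the factorization $|F(x,gh)-F(x,h)|\le \|\eta\|_\infty\,|\theta(gh)-\theta(h)|$ together with $\theta\in R_G$. The only difference is that you spell out the extension-by-zero argument for continuity with an explicit open-cover construction, whereas the paper leaves this implicit.
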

\begin{proof} The property (1) of the Definition \ref{def:rucbGP} is verified by using the hypothesis on the support of $\eta$. Instead property (2) follows by inequality $$|F(x,gh)-F(x,h)|\leq (\max|\eta|)|\theta(gh)-\theta(h)|$$ and the fact that $\theta \in R_G$.
\end{proof}
\begin{defin} Denote by $S(\PP,u_0)=\Omega(R_{\PP,u_0})$, the locally compact space corresponding, by Gelfand construction, to the commutative $C^\ast$-algebra $R_{\PP,u_0}$.
Once $u_0\in P$ is fixed, we will denote $R_{\mathcal{P},u_0}=R_\PP$ e $S(\mathcal{P},u_0)=S(\PP)$.
 \end{defin}
 Recall that the space $S(\PP)=\Omega(R_\PP)$ is composed of surjective ring homomorphisms $\xi:R_\PP\to \CC$ and that there exists a natural map
$i: P\to S(\PP)$ defined as follows: $i(\tau)=\hat{\tau}$, where $\hat{\tau}\in S(\PP)$ is the homeomorphism $\hat{\tau}:R_\PP\to \CC$ such that $\hat{\tau}(f)=f(\tau)$.

\begin{thm} $i: P\to S(\PP)$ is a continuous injective map with dense image.
\end{thm}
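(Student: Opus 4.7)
The plan is to verify continuity, injectivity, and density separately, using only the Gelfand description $S(\PP)=\Omega(R_\PP)$, the local trivialization of Proposition \ref{prop:RPloc}, and the family of test functions produced by Lemma \ref{lm:RG}. Continuity of $i$ is essentially immediate: by construction $S(\PP)$ carries the weak topology making every evaluation $\xi\mapsto \xi(f)$ continuous for $f\in R_\PP$, while $f\circ i$ sends $\tau$ to $\hat\tau(f)=f(\tau)$, which is just $f$ viewed as a function on $P$; by condition (1) of Definition \ref{def:rucbGP} this is continuous.

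For injectivity, I fix $\tau_1\neq \tau_2$ in $P$ and split into two cases. If $p(\tau_1)\neq p(\tau_2)$, then, since $X$ is Hausdorff and locally trivializing neighborhoods exist by Proposition \ref{prop:loctrivial}, I choose an open neighbourhood $U$ of $p(\tau_1)$ supporting a section of $p:P\to X$ and disjoint from some neighbourhood of $p(\tau_2)$; Urysohn's lemma (for locally compact Hausdorff spaces) yields $\eta\in C_0(X)$ with compact support in $U$ and $\eta(p(\tau_1))\neq 0$. Applying Lemma \ref{lm:RG} with the constant $\theta\equiv 1\in R_G$ produces $F\in R_\PP$ with $F(\tau_1)=\eta(p(\tau_1))\neq 0=F(\tau_2)$. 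If instead $p(\tau_1)=p(\tau_2)=x$, I pick a neighbourhood $U$ of $x$ on which the homeomorphism $\psi':U\times G\to p^{-1}(U)$ of Proposition \ref{prop:RPloc} is defined, so that $\tau_i$ corresponds to $(x,h_i)$ with $h_1\neq h_2$. Using the hypothesis that $G$ is metrizable by a right-invariant compatible metric $d$, the bounded function $\theta(h)=\min(d(h,h_1),1)$ lies in $R_G$: right-invariance gives $d(gh,h)=d(g,1)$, hence $|\theta(gh)-\theta(h)|\leq d(g,1)$, so the defining estimate of $R_G$ holds with $\VV=\{g:d(g,1)<\epsilon\}$. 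Since $\theta(h_1)=0\neq \theta(h_2)$, choosing $\eta\in C_0(X)$ with support in $U$ and $\eta(x)\neq 0$, Lemma \ref{lm:RG} produces $F\in R_\PP$ with $F(\tau_i)=\eta(x)\theta(h_i)$, and therefore $F(\tau_1)\neq F(\tau_2)$.

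Density follows by the standard Gelfand argument. Suppose $\overline{i(P)}\subsetneq S(\PP)$. Under the correspondence between closed subsets of $\Omega(R_\PP)$ and closed ideals of $R_\PP$, there would exist a nonzero $f\in R_\PP$ with $\hat\tau(f)=0$ for every $\tau\in P$; but $\hat\tau(f)=f(\tau)$ by definition of $i$, so $f$ would vanish identically on $P$, contradicting $f\neq 0$. Hence $\overline{i(P)}=S(\PP)$. The only delicate step is the second case of injectivity, which relies essentially on the standing hypothesis that $G$ admits a compatible right-invariant metric so that $R_G$ separates the points of $G$; everything else reduces to routine features of the Gelfand correspondence and to Lemma \ref{lm:RG}.
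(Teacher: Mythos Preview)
Your proof is correct and follows essentially the same approach as the paper: continuity via the weak-$*$ topology, density via the Gelfand correspondence (a function in $R_\PP$ vanishing on $i(P)$ is zero), and injectivity via the same two-case split on whether $p(\tau_1)=p(\tau_2)$, using Lemma~\ref{lm:RG} to manufacture a separating function. The only cosmetic difference is that the paper merely invokes the existence of some $\theta\in R_G$ with $\theta(1)\neq\theta(h)$, whereas you explicitly construct $\theta(h)=\min(d(h,h_1),1)$ from the right-invariant metric; this makes the reliance on the metrizability hypothesis more transparent but does not alter the argument.
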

\begin{proof} Since the open sets of $S(\PP)$ have as pre-basis sets like $f^{-1}(\Omega)$, with $f\in R_\PP$ and $\Omega\subset \CC$ open set, whose counterimages by $i$ are open sets of $P$, the continuity of $i$ easily follows.

Moreover, a continuous function $f\in R_\PP$  which is zero in $i(P$) is equal to zero since $f(\hat{\tau})=\hat{\tau}(f)=f(\tau)=0$ for any $\tau\in P$. Therefore the subspace
$i(P)\subset S(\PP)$ is dense.

  Let$[u_0,v]$ and $[u_0,w]$ a pair of distinct elements of $P$. If $p(v)\not=p(w)$ then, since $X$ is locally compact, we can find a function $\eta\in R_\PP$ like
   $\eta(x)$, exactly as in Lemma \ref{lm:RG}, such that $\eta(p(v))\not=\eta(p(w))$, hence $i([u_0,v])\not=i([u_0,w])$.

   If $p(v)=p(w)=y$, then $[u_0,w]=[u_0,vh^{-1}]=[u_0h,v]$ for some $h\in G$ with $h\not=1$.
Consider a function $\theta:G\to \CC$ in $R_G$ such that $\theta(1)\not=\theta(h)$.
It follows that there exists a local section $x\mapsto x'$ defined in a neighborhood $U$ of $y\in X$ such that $y'=v$.
Let $\eta$ be a function with compact support inside $U$ such that $\eta(y)=1$.
Using notations of Proposition \ref{prop:RPloc}, the function $G:U\times G\to \CC$ built in Lemma \ref{lm:RG} is such that $F(y,h)=\theta(h)\not=\theta(1)=F(y,1)$.
Therefore $i([u_0,v])\not=i([u_0,v'])$.
\end{proof}

   \section{Groupoid Actions}\label{GroAct}
   We recall the definition of right action of a topological groupoid on a topological space.

\begin{defin}\label{def:azionegpd} Let us assume that there exists continuous maps
\begin{itemize}

  \item the {\bf anchor map} $\rho: Y\to G_0,$
  \item $\alpha: Y\times_{\rho,s}G_1\to Y,$, where $Y\times_{\rho,s}G_1=\{(y,g)\ |\ s(g)=\rho(y)\}$

  \end{itemize}
Using the simplified notation $\alpha(y,g)=yg$, we will say that  the maps $\rho,\alpha$ define a right action of the topological groupoid $(G_0,G_1)$ on the topological space $Y$ when the following conditions are met:
\[\begin{array}{l}
\rho(yg)=t(g),\\
(yh)g=y(hg),\\
yu(\rho(y))=y.
\end{array}\]
\end{defin}

 \section{The space $S(\PP)$ as a fiber bundle on $X$}

Denote, as in the previous sections, the locally compact and locally simply connected space $G_0$ with $X$.
Let $G$ be a metrizable topological group with bounded invariant metric,
$p:P\to X$ a principal (right) $G$-bundle and $\GG=\GG(P)$ its related locally trivial groupoid.

Recall that for any fixed $x_0\in X$, the map $t:s^{-1}(x_0)\to X$ can be identified with $p$ and regarded as a left $G$-bundle $P$.

Let us consider the locally compact space $S(\PP)$ associated with the $C^\ast$-algebra $R_\PP$.
There exists a natural inclusion $\rho_\PP^\ast:C_0(X)\hookrightarrow R_{\PP}$
well defined by restriction of $\eta\mapsto \eta\circ t$ on $P$ by Lemma \ref{lm:RG}.

 Since $X$ is locally compact, by Gelfand construction, $X$ is the space associated to its $C^\ast$-algebra $C_0(X)$.
 It follows that the inclusion $\rho_\PP^\ast:C_0(X)\hookrightarrow R_{\PP}$ is the dual of a continuous surjective map
\begin{equation}\label{eq:rhoP} \rho_\PP: S(\PP)\to X.\end{equation}

\begin{thm}\label{thm:properSP} $\rho_\PP$ is a proper map and it has all the fibers isomorphic to $S(G)$.
Moreover there exists an open covering of locally contractible sets and with compact closure
$\{U_i\}$ of $X$  such that if ${g_{i,j}(x)}$ are the transition functions of the left $G$-bundle $P$ associated to such a covering,
then the following holds:

\begin{enumerate}
\item $\rho_\PP^{-1}(U_i)\cong U_i\times S(G)$ and the left action of $G$ on  $\rho_\PP^{-1}(U_i)$ is given by the left action of $G$ on $S(G)$.
\item The transition functions  $\theta_{ij}:\rho_\PP^{-1}(U_i)|_{U_i\cap U_j}\to \rho_\PP^{-1}(U_j)|_{U_i\cap U_j}$ are defined by $\tau\mapsto \tau g_{i,j}(x)$, i.e. they are induced by transition functions of $P$.
\end{enumerate} \end{thm}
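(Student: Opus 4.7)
The proof reduces to a local trivialization. First I would use Proposition~\ref{prop:loctrivial} together with local compactness of $X$ to pick an open cover $\{U_i\}$ of $X$ by locally contractible sets with compact closure, on each of which $P$ is trivial via a section $x\mapsto x'_i$ of $p$; this yields the homeomorphisms $\psi'_i:U_i\times G\to p^{-1}(U_i)$, $(x,h)\mapsto [u_0h,x'_i]$, of Proposition~\ref{prop:RPloc}, together with transition functions $g_{ij}:U_i\cap U_j\to G$ characterised by $\psi'_j(x,h)=\psi'_i(x,hg_{ij}(x))$.

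The heart of the argument is a local model for $R_\PP$. Let $\mathcal{I}_i\subset R_\PP$ denote the closed ideal of functions vanishing outside $p^{-1}(U_i)$. Pulling back along $\psi'_i$ and applying Proposition~\ref{prop:RPloc}, $\mathcal{I}_i$ is identified with the $C^*$-algebra of bounded continuous $F:U_i\times G\to\CC$, compactly supported in the $U_i$-coordinate and satisfying $|F(x,gh)-F(x,h)|<\epsilon$ for $g$ near $1\in G$, uniformly in $(x,h)$. The key claim is that $F\mapsto \phi_F$, with $\phi_F(x):=F(x,\cdot)$, gives an isomorphism $\mathcal{I}_i\cong C_0(U_i,R_G)$ onto the norm-continuous $R_G$-valued functions on $U_i$ vanishing at infinity. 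Granting this, Gelfand duality delivers
\[
\rho_\PP^{-1}(U_i)\;\cong\;\Omega(\mathcal{I}_i)\;\cong\;\Omega\bigl(C_0(U_i,R_G)\bigr)\;\cong\;U_i\times S(G),
\]
which is the local triviality in~(1) and the identification of each fibre with $S(G)$; the left $G$-action on $P$ is translation in the $h$-coordinate, which dualises to the standard left action of $G$ on $S(G)$.

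For claim~(2) I would track how the two pullbacks by $\psi'_i$ and $\psi'_j$ disagree: the coordinate change $\psi'_j(x,h)=\psi'_i(x,hg_{ij}(x))$ implements right translation of the $G$-variable inside $R_G$ by $g_{ij}(x)$, which on spectra $S(G)=\Omega(R_G)$ is exactly $\tau\mapsto\tau g_{ij}(x)$, as claimed. Properness of $\rho_\PP$ follows by shrinking $\{U_i\}$ to a refinement $\{V_i\}$ with $\overline{V_i}\subset U_i$ still covering $X$: any compact $K\subset X$ meets only finitely many $\overline{V_{i_k}}$, and $\rho_\PP^{-1}(K)$ is then a closed subset of the compact union $\bigcup_k \overline{V_{i_k}}\times S(G)$, hence compact.

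The main obstacle is the local algebra isomorphism $\mathcal{I}_i\cong C_0(U_i,R_G)$: concretely, promoting the uniform-in-$x$ right-uniform continuity of $F$ to norm continuity of $\phi_F:U_i\to R_G$, and conversely realising every element of $C_0(U_i,R_G)$ as arising from an admissible $F$. Lemma~\ref{lm:RG} already supplies the elementary tensors $\eta(x)\theta(h)$, which lie in both algebras; an equicontinuity argument combining joint continuity of $F$, the uniform RUC hypothesis, and compactness of the support in $x$ should then yield density of such tensors in $\mathcal{I}_i$, completing the identification. Once this local $C^*$-isomorphism is in hand, the remaining assertions — local triviality, fibre identification, transition formula, properness and equivariance of the $G$-action — follow as routine consequences of the functoriality of the Gelfand construction recalled in Proposition~\ref{prop:semigelf}.
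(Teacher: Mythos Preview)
Your proposal is correct and follows essentially the same route as the paper: choose a cover by locally contractible sets with compact closure, use the local sections and Proposition~\ref{prop:RPloc} to identify the relevant local algebra with continuous $R_G$-valued functions on $U_i$, and read off the fibre-bundle structure, the $G$-action, and the transition maps via Gelfand duality. The only minor differences are organisational: the paper phrases the local algebra as the \emph{restriction} of $R_\PP$ to $P|_{U_i}$ (invoking density of $P|_{U_i}$ in $\rho_\PP^{-1}(U_i)$) and simply asserts that it ``coincides with $C(U_i\times S(G))$'', whereas you phrase it as the closed \emph{ideal} $\mathcal{I}_i$ and explicitly isolate the step $\mathcal{I}_i\cong C_0(U_i,R_G)$, sketching a density-of-elementary-tensors argument for it; and you give an explicit properness argument via a shrinking refinement, while the paper leaves properness implicit in the local product structure with compact fibre. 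Your version is, if anything, more scrupulous about the point the paper glosses over.
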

\begin{proof}
Let us consider first that $P|_{U_i}$ is dense in $\rho_\PP^{-1}(U_i)$, then $C_0(\rho_\PP^{-1}(U_i))$ is isomorphic to the restriction of $R_\PP$ to $P|_{U_i}$.
 If $\{U_i\}$ is an open covering of $X$ with $U_i$ locally contractible and with compact closure, then, possibly resizing the open sets $U_i$, $C_0(X)$, by a restriction of its domain to $U_i$, becomes $C(U_i)$. Moreover if the open sets $U_i$ are locally contractible, then the bundle $P|_{U_i}=t^{-1}(U_i)$ has a section on $U_i$. Let such a section be $x\mapsto x'$,
 it follows that there exists the homeomorphism $P|_{U_i}\cong  U_i\times G$ defined as in Proposition \ref{prop:RPloc}.
 We calculate left action of $G=G[x_0]$ on $P|_{U_i}$ as follows. Let identify $g\in G$ with $[u_0g,u_0]\in G[x_0]$,
 acting as in the definition of the isomorphism $\phi_{x_0}$ introduced in Proposition \ref{prop:GP}, then we have
$$g[u_0,v]=[u_0g,u_0][u_0,v]=[u_0g,v]=[u_0gh,x']\mapsto (x,gh)\in U_i\times G.$$ Therefore we can regard such an action as the left action of $G$ on $U_i\times G$.
Proposition \ref{prop:RPloc} and Definition \ref{def:rucbGP} then show that functions $f$ su $P|_{U_i}$, restrictions of functions in $R_\PP$, can be described as continuous functions $F(x,h)$ such that for any $\epsilon>0$ there exists a neighborhood $\VV\ni 1\in G$ such that $|F(gh,x)-F(h,x)|<\epsilon$ for any $x\in U_i$, $k\in G$ and any $g\in\VV$.
 Hence the algebra of such functions coincides with $C(U_i\times S(G))$ which in turns implies that $P|_{U_i}$ is homeomorphic to $U_i\times S(G)$.
The action of $G$ on the functions $f(h,x)$ obtained from $R_\PP$ can be calculated through the formula $F^g(x,h)=F(g^{-1}\cdot(x,h))=F(x,g^{-1}h)$, and this action corresponds to the left action of $G$ on $U_i\times S(G)$ induced by the natural left action of $G$ on $S(G)=\Omega(R_G)$. Then $\rho_\PP^{-1}(U_i)\cong U_i\times S(G)$, with left action of $G$ induced by the canonical one defined on $S(G)$.
Let now be $P|_{U_i}\cong U_i\times G$ e $P|_{U_j}\cong U_j\times G$ two trivializations related by a transition isomorphism
$(x,h)\mapsto (x,hg_{i,j}(x)^{-1})$ on $U_i\cap U_j$.

This isomorphism arises from the relation $x'=x''g_{i,j}(x)$, for $x'$ section on $U_i$ and $x''$ section on $U_j$, therefore by the composition of isomorphisms in the following way
$$(x,h)\mapsto [u_0h,x']=[u_0h,x''g_{i,j}(x)]=[u_0hg_{i,j}(x)^{-1},x'']\mapsto (x,hg_{i,j}(x)^{-1}).$$

As usual, the action on the functions $F(x,h)$ restricted to $R_\PP$ for $x\in U_i\cap U_j$, is $F(x,h)\mapsto F(x,hg_{i,j}(x))$, from $R_\PP$ to $R_\PP$,
and then it induces an automorphism of $(U_i\cap U_j)\times S(G)$. This is enough to show that there exists a gluing map between
$\rho_\PP^{-1}(U_i)$ and $\rho_\PP^{-1}(U_j)$ along  $\rho_\PP^{-1}(U_i\cap U_j)$ which extends the given one for $P|_{U_i}$ and $P|_{U_j}$.
\end{proof}
\section{The right action of $\GG$ on $S(\PP)$}
\begin{thm}\label{thm:SPaction} There exists a continuous action of groupoid
$$\alpha_P:S(\PP)\times_{\rho_\PP,s}G_1\to S(\PP)$$ with anchor map $\rho_\PP$.\end{thm}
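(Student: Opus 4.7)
The plan is to build $\alpha_P$ by extending, via the $S(G)$-bundle structure of Theorem \ref{thm:properSP}, the natural right action of $\GG$ on the dense subset $P\hookrightarrow S(\PP)$ given by groupoid composition. Under the identification $P\cong s^{-1}(x_0)$ of Theorem \ref{thm:fibratogpd}, every element has the form $[u_0,v]$ and the formula $([u_0,v],g)\mapsto [u_0,v]\cdot g$ defines a continuous right action of $\GG$ on $P$ with anchor map $p$; this is the action I wish to extend fiberwise.

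First I would describe this action in local coordinates. Using the open cover $\{U_i\}$ of Theorem \ref{thm:properSP} and chosen sections $x\mapsto x'$ of $p$ over each $U_i$, every $g\in G[x,y]$ with $x\in U_i$ and $y\in U_j$ admits a unique representation $g=[x',y''k(g)]$ with $k(g)\in G$, where $y\mapsto y''$ is the section over $U_j$. By Ehresmann's Theorem \ref{thm:ehres1}, the map $g\mapsto k(g)$ is continuous on arrows with source in $U_i$ and target in $U_j$. The composition rule of Proposition \ref{prop:GP}(3), combined with the identification $p^{-1}(U_i)\cong U_i\times G$ of Proposition \ref{prop:RPloc}, shows that in these local charts the action $\tau\mapsto \tau g$ on $P$ is simply $(x,h)\mapsto (y,h\,k(g)^{-1})$, i.e.\ right multiplication by $k(g)^{-1}$.

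Next I would extend to $S(\PP)$. Theorem \ref{thm:properSP}(1) upgrades the trivialization to $\rho_\PP^{-1}(U_i)\cong U_i\times S(G)$, and the Gelfand dual of right translation on $R_G$ supplies the continuous right $G$-action on $S(G)$ that restricts to right multiplication on the dense subset $G\subset S(G)$. I would therefore set $\alpha_P((x,\xi),g):=(y,\xi\cdot k(g)^{-1})$ over each $U_i\times U_j$. Well-definedness on overlaps rests on Theorem \ref{thm:properSP}(2): replacing sections $(x',y'')$ by $(x'g_i(x),y''g_j(y))$ transforms $k(g)$ into $g_j(y)^{-1}k(g)g_i(x)$, while the transition isomorphisms of $\rho_\PP^{-1}$ act by right multiplication by the same $g_i,g_j$; the two changes cancel, producing a single global map.

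Finally I would verify the axioms and continuity. The anchor compatibility $\rho_\PP(\tau g)=t(g)$ is built into the construction. For associativity, a direct calculation with Proposition \ref{prop:GP}(3) yields $k(hg)=k(g)k(h)$, hence $(\xi\cdot k(h)^{-1})\cdot k(g)^{-1}=\xi\cdot k(hg)^{-1}$ in local coordinates, so $(\tau h)g=\tau(hg)$. Units act trivially because $u(x)=[x',x']$ gives $k=1$. Joint continuity of $\alpha_P$ follows from the continuity of $k(\cdot)$ and the joint continuity of the right $G$-action on $S(G)$. I expect the principal obstacle to be the well-definedness step: verifying that the sections-dependent local formula indeed descends to a map on the global fibered product $S(\PP)\times_{\rho_\PP,s}G_1$. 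This is secured precisely by the transition-function compatibility between $P$ and $S(\PP)$ established in Theorem \ref{thm:properSP}(2).
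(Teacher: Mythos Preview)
Your proposal is correct and follows essentially the same strategy as the paper: start from the natural right $\GG$-action on $P\cong s^{-1}(x_0)$ by groupoid composition, express it in the local trivializations of Theorem~\ref{thm:properSP} as right $G$-multiplication on the fiber, extend fiberwise to $S(G)$, and assemble the local pieces into a global action. The only difference is packaging: the paper carries out the extension step dually, by pulling $R_\PP$ back along $\alpha_P$ and checking that the image lands in $C_0(S(\PP)|_{U_i}\times G_{i,j})$, whereas you invoke the right $G$-action on $S(G)$ directly and handle gluing via the transition functions of Theorem~\ref{thm:properSP}(2); these are equivalent formulations of the same argument.
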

\begin{proof}
There exists a natural right action of groupoid
$$ \alpha_P:P\times_{t,s}G_1\to P$$ defined by $\alpha_P([u_0,v], [v,w])=[u_0,w]$.
This action is continuous for the topology on $G_1$ showed in Section \ref{sec:ehresmann} and for that one on $P=s^{-1}(x_0)$, inherited from $G_1$.
Let now consider an open covering $\{U_i\}$ of contractible of $X$ exactly as in Theorem \ref{thm:properSP}, consequently $P|_{U_i}\cong U_i\times G$ e $\rho_\PP^{-1}(U_j)\cong U_i\times G$.
Let $G_{i,j}=\psi(U_i\times U_j\times G)$ an open covering of $G_1$, where $\psi$ is the map defined by $\psi(x,y,k)=[x'k,y']$, using notation introduced in the proof of Theorem \ref{thm:ehres1}. Then for any $i,j$, the map $\alpha_P$ restricts to a continuous map
$$\alpha_P:P|_{U_i}\times_{t,s} G_{i,j}\to P|_{U_j}.$$

Let us notice that $P|_{U_i}\times_{t,s} G_{i,j}\cong U_i\times U_j\times G\times G$ by the map $$\tilde{\psi}:  U_i\times U_j\times G\times G\to P|_{U_i}\times_{t,s} G_{i,j}$$ defined by $\tilde{\psi}(x,y, h, k)=([u_0h,x'],[x'k,y'])$.
Conversely
$U_j\times G\cong P|_{U_j}$ using homeomorphism $(y,k)\mapsto [u_0k,y']$. Using the above identifications the map $\alpha_P$ can be regarded as the map
$${\alpha_P}:(x,y,h,k)\mapsto (y,hk).$$

We consider the pull-back of functions $f\in R_\PP$ through $\alpha_P$.
Recall that the functions $f$ on $P|_{U_j}$ related to $R_\PP$, in the identification $P|_{U_j}\cong U_j\times G$, are continuous functions $F(y,k)$ such that for any $\epsilon>0$ there exists a neighborhood $\VV\ni 1\in G$ such that $|F(y,gk)-F(y,k)|<\epsilon$ for any $k,y$ and any $g\in\VV$.
Then the function $F(x,y, h,k)=f(\alpha_P(x,y,h,k))=F(y,hk)$ satisfies property $|F(x,y,gh,k)-F(x,y,h,k)|<\epsilon$ for any $x,y,h,k$ and any $g\in \VV$.
Consequently the algebra $\alpha_P^\ast(R_\PP)$ restricted to $P|_{U_i}\times G_{i,j}$ is contained in $C_0(U_i\times S(G)\times G_{i,j})$, and therefore in $C_0(S(\PP)|_{U_i}\times G_{i,j})$. Hence the action $\alpha_P$ extends to the continuous function
\begin{equation}\label{eq:alphaPloc}\alpha_P: S(\PP)|_{U_i}\times G_{i,j}\to S(\PP)|_{U_j}\end{equation} for any $i,j$.
Since those actions restricted to $P\times G_1$ glue each other to form the given action $\alpha_P:P\times G_1\to P$, and by density of $P|_{U_i}$ in $S(\PP)|_{U_i}$ for any $i$, then the functions (\ref{eq:alphaPloc}) stick together to form a global action
$\alpha_P:S(\PP)\times G_1\to S(\PP)$.
The required properties on the anchor map $\rho_\PP$ and on the groupoid action are easy to proof, using for example the density of $P$ in $S(\PP)$
and the unique extension of the same properties of the action of $G_1$ on $P$.

\end{proof}
\section{Universal Property of $S(\PP)$}\label{UnivProp}

 Let $\beta: Y\times_{\rho,s} G_1\to Y$ be a right action of a transitive topological groupoid $\GG=(G_0,G_1)$ on a space $Y$ with proper anchor map $\rho:Y\to G_0=X$.

 \begin{thm}[Universal Property of $S(\PP)$]\label{thm:univSP} For any action $$\beta: Y\times_{\rho,s} G_1\to Y$$ with proper anchor map $\rho:Y\to X$  and for any fixed $y\in \rho^{-1}(x_0)$ there exists a unique continuous map $l_{y}:S(\PP)\to Y$ such that $l_{y}(u_0)=y$ and that is compatible with the actions $\alpha_P$ and $\beta$.  \end{thm}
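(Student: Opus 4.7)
The plan is to build $l_y$ by extending the natural orbit map from $P$ to $S(\PP)$, using the fiber bundle structure of $\rho_\PP:S(\PP)\to X$ (Theorem~\ref{thm:properSP}) together with an analogous bundle structure on $Y$ that reduces the problem fiberwise to the classical universal property of $S(G)$.

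First I define $\tilde l_y:P=s^{-1}(x_0)\to Y$ by $\tilde l_y(\tau)=\beta(y,\tau)=y\cdot\tau$, well-defined because $s(\tau)=x_0=\rho(y)$. This map is continuous, $\GG$-equivariant in the sense $\tilde l_y(\tau\gamma)=\tilde l_y(\tau)\cdot\gamma$, sends $[u_0,u_0]$ to $y$, and satisfies $\rho\circ\tilde l_y=p$. Uniqueness of any extension $l_y$ is then automatic: on the dense subset $i(P)\subset S(\PP)$, the conditions $l_y(u_0)=y$ and equivariance force $l_y(i(\tau))=\beta(y,\tau)=\tilde l_y(\tau)$.

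Next I observe that $\rho:Y\to X$ is itself a locally trivial fiber bundle with compact fiber $Y_{x_0}:=\rho^{-1}(x_0)$. Compactness follows from properness of $\rho$, while local triviality of $\GG$ furnishes, on a small neighbourhood $U$ of any $x\in X$, a continuous section $\sigma:U\to G_1$ of $t$ with $s\circ\sigma\equiv x_0$; choosing $\sigma(x)=[u_0,x']$ compatibly with a local section $x\mapsto x'$ of $p:P\to X$, the map $\Phi_U:U\times Y_{x_0}\to Y|_U$, $(x,y')\mapsto y'\cdot\sigma(x)$, is a homeomorphism (inverse $z\mapsto(\rho(z),z\cdot\sigma(\rho(z))^{-1})$). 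A short direct calculation shows that in these local coordinates $\tilde l_y|_{p^{-1}(U)}$ becomes $(x,h)\mapsto(x,T_h y)$, where $T_h y:=y\cdot[u_0h,u_0]$ is the continuous right action of $G=G[x_0]$ on the compact space $Y_{x_0}$.

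Applying the classical greatest-ambit property of $S(G)$ to the compact right $G$-space $(Y_{x_0},y)$ produces a unique continuous $G$-equivariant map $\hat\mu_y:S(G)\to Y_{x_0}$ extending the orbit map $h\mapsto T_h y$. Using the open covering $\{U_i\}$ of Theorem~\ref{thm:properSP}, I define $l_y$ locally by
\[
l_y|_{U_i}\colon U_i\times S(G)\longrightarrow U_i\times Y_{x_0},\qquad (x,s)\mapsto(x,\hat\mu_y(s)),
\]
which agrees with $\tilde l_y$ on the dense subset $P|_{U_i}$ since $\hat\mu_y|_G(h)=T_h y$. On overlaps $U_i\cap U_j$, the transition functions of $S(\PP)$ (from Theorem~\ref{thm:properSP}) and those of $Y$ constructed in the previous paragraph are both induced by the same cocycle $g_{ij}(x)\in G$ acting on the respective fibers $S(G)$ and $Y_{x_0}$; $G$-equivariance of $\hat\mu_y$ makes these actions intertwine, so the local pieces glue to a well-defined continuous $l_y:S(\PP)\to Y$. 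Continuity and density of $P$ then transport $l_y(u_0)=y$, $\rho\circ l_y=\rho_\PP$, and compatibility with $\alpha_P$ and $\beta$ from $\tilde l_y$ to $l_y$.

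The main technical point sits in the classical greatest-ambit step: one must check that for $f\in C(Y_{x_0})$ the function $h\mapsto f(T_h y)$ lies in $R_G$, i.e.\ $|f(T_{gh}y)-f(T_hy)|<\epsilon$ uniformly in $h\in G$ for $g$ in a neighbourhood of~$1$. Writing $T_{gh}=T_h\circ T_g$ reduces this to bounding $|f(T_h(T_gy))-f(T_hy)|$; because the family $\{T_h\}_{h\in G}$ need not be equicontinuous at $y$, uniform continuity of the action near the identity on the compact fiber (yielding $|T_{g'}w-w|<\epsilon$ for $g'\in\VV$ and all $w$) only gives what is needed after applying the estimate at $g'=h^{-1}gh$. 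For this to have a uniform $g$-range, one exploits the hypothesis that $G$ carries a bi-invariant metric, so that small metric balls around $1$ are conjugation-invariant.
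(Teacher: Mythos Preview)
Your argument is correct and follows a genuinely different route from the paper's. The paper proceeds purely on the $C^\ast$-algebra side: it defines $l_y^\ast:C_0(Y)\to C_0(P)$ by $l_y^\ast f([u_0,v])=f(\beta(y,[u_0,v]))$, verifies directly that the image lies in $R_\PP$ (checking conditions (1) and (2) of Definition~\ref{def:rucbGP}), and then invokes Gelfand duality to produce $l_y:S(\PP)\to Y$; equivariance and uniqueness are read off from density of $P$ in $S(\PP)$. Your approach is geometric: you use the local triviality of $\GG$ to exhibit $\rho:Y\to X$ as a fiber bundle with compact fiber $Y_{x_0}$, reduce the problem fiberwise to the classical greatest-ambit property of $S(G)$ to obtain $\hat\mu_y:S(G)\to Y_{x_0}$, and then glue the resulting local maps $U_i\times S(G)\to U_i\times Y_{x_0}$ using that both bundles share the same transition cocycle $g_{ij}$ and that $\hat\mu_y$ is $G$-equivariant. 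This makes the role of Theorem~\ref{thm:properSP} and of the classical $S(G)$ transparent, at the cost of having to set up the bundle structure on $Y$ explicitly; the paper's argument is shorter but hides exactly this picture inside the single estimate for condition~(2).

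On your final technical paragraph: the conjugation issue you isolate is precisely what underlies the paper's assertion that $F(x,h)=f(y_{x'h^{-1}})$ is ``uniformly continuous on $K\times G$''; unwinding that claim one finds the same factor $h^{-1}gh$. Your appeal to a bounded invariant metric on $G$ (one of the paper's standing hypotheses) is a legitimate way to close this. Alternatively, the conjugation can be avoided altogether by parametrizing the fiber action via $\psi_{u_0}$ rather than $\phi_{u_0}$ (equivalently, writing $T'_k y=y[u_0,u_0k]$), which turns the estimate into $|f(w\cdot g)-f(w)|$ with $w=T'_k y$ ranging over the compact $Y_{x_0}$ and no conjugation; this is the form in which the classical greatest-ambit argument applies without any metric hypothesis.
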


 \begin{proof} Our claim is to prove that the homeomorphism of algebras $l^\ast_{y}:C_0(Y)\to C_0(P)$, defined by $l^\ast_{y}f([u_0,v])=f(\beta(y,[u_0,v]))$, has image in $R_\PP$.
 We will use multiplicative notation
 $\beta(y[u_0,v])=y[u_0,v]$ for the action on $Y$. Let $y_v=y[u_0,v]$ and, since $r^\ast_{y}f([u_0,v])=f(y_v)$, we can assert that if the support of $f$ is in $\rho^{-1}(K)$, with $K\subset X$, compact, then, since $\rho(y_v)=p(v)$, the support of $r^\ast_y(f)$ is in $t^{-1}(K)$.  Hence condition (1) of the Definition \ref{def:rucbGP} is verified.

 For an open set $U\subset X$ with local section $x\mapsto x'$ assume
 $$F(x,h)=l_y^\ast f([u_0h,x'])=l_y^\ast f([u_0,x'h^{-1}])=f(y_{x'h^{-1}}).$$
 Since $y_{x'h^{-1}}$ runs on a compact $\rho^{-1}(K)$, the function $F(x,h)$ turns out to be uniformly continuous on $K\times G$, then for any $\epsilon<0$ there exists $\VV$, a neighborhood of $1\in G$ such that for any $x\in K$, any $h\in G$ and any $g\in \VV$ we have
$|F(x,gh)-F(x,h)|=|f(y_{x'h^{-1}g^{-1}})-f(y_{x'h^{-1}})|<\epsilon$.
Notice that Proposition \ref{prop:RPloc} implies condition (2) of the Definition \ref{def:rucbGP}.
Since $l^\ast_y:C_0(Y)\to R_\PP$ is an homeomorphism of $C^\ast$-algebras, then there exists the desired map $l_y:S(\PP)\to Y$.
Moreover, this map is univocally determined by its restriction to $P$, that is the map $[u_0,v]\to y[u_0,v]$,
which shows the uniqueness of $l_y$ by density of $P$ in $S(\PP)$.
Finally, using again the density of $P$ and the compatibility of the restriction of $l_y$ to $P$, we deduce the compatibility of $l_y$ with the given actions on $S(\PP)$ and on $Y$.
 \end{proof}
  \section{Uniqueness up to isomorphisms of the minimal flows of $S(\PP)$}\label{MinFlow}

  In this section we will prove a similar result to the existence of a universal minimal compact $G$-space showed in \cite{usp}. For reader's convenience we briefly recall some notations and results related to topological groups, which in the present paper, in some extent, we generalize to topological groupoids.
Let $G$ be a topological group, a $G$-space is a topological space $X$ with a continuous action of $G$. A $G$-space $X$ is minimal if the orbit $Gx$ is dense in $X$. The universal minimal compact $G$-space $M_G$ is characterized by the following property: $M_G$ is a minimal compact $G$-space, and for every compact minimal $G$-space $X$ there exists a $G$-map of $M_G$ onto $X$. It is well known that any two universal minimal compact $G$ spaces are isomorphic, see for example \cite{Aus} or \cite{usp}.

Here we show that minimal $\GG$  subflows of $S(\PP)$, on some respects, behave in the same way. Our proof is a slight modification of the one contained in \cite{usp}.
Let us consider $P=s^{-1}(x_0)$ with related anchor map $\rho_P :S(\PP)\rightarrow X $, which extends the map $t:P\to X$.
By Theorem \ref{thm:SPaction} for any $y\in\rho_{\PP}^{-1}(x_0)$  there exists a unique map $l_y:S(\PP)\rightarrow S(\PP)$ such that $l_y(u_0)=y$.
If $y\not=y'$ then $l_y\not=l_{y'}$, moreover, by Theorem \ref{thm:properSP}, $\rho_P^{-1}(x_0)\cong S(G)$ and by such isomorphism $u_0\in p^{-1}(x_0)$, identified with $[u_0,u_0]\in t^{-1}(x_0)\subset \rho_P^{-1}(x_0)$, corresponds to $1\in G$.
We now can define an action $\Phi: S(G)\times S(\PP) \rightarrow S(\PP)$ such that $\Phi (y,z)=l_y(z)$.
Notice that, by universal property of $S(G)$, for $y,z$ in $S(G)=\rho_\PP^{-1}(x_0)$, the product $l_y(z)=yz$ is the same of the one giving to $S(G)$ a semigroup structure and defined in Theorem 2.1 of \cite{usp}. However we warn the reader that in \cite{usp} the product in $S(G)$  is defined as $yz=r_z(y)$, that is, by means of similar maps $r_z$ as the $l_y$ defined here, such that $r_z$ is invariant for the {\em left} action of $G$ on $S(G)$, whereas we have a right $G_1$ invariance (in particular right $G$ invariance) for the maps $l_y$.
\begin{notation} For $y,z\in \rho_\PP^{-1}(x_0)\cong S(G)$, we denote $l_y(z)=yz$.\end{notation}
\begin{prop} For $y,z\in \rho_\PP^{-1}(x_0)$ we have $l_{y}l_{z}=l_{yz}$.
\end{prop}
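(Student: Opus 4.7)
The plan is to invoke the uniqueness clause of Theorem \ref{thm:univSP} (Universal Property of $S(\PP)$) applied to the case $Y = S(\PP)$, $\beta = \alpha_P$. The strategy is standard: to check that two maps into $S(\PP)$ coincide, it suffices to verify that both are continuous, compatible with the groupoid action, and send $u_0$ to the same element of $\rho_\PP^{-1}(x_0)$.

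First I would observe that the notation $l_{yz}$ actually makes sense, i.e.\ that $yz = l_y(z) \in \rho_\PP^{-1}(x_0)$. This follows from the fact that $l_y$ preserves the anchor map: indeed, by its construction through the pull-back $l_y^\ast f([u_0,v]) = f(y[u_0,v])$, the map $l_y$ extends $[u_0,v] \mapsto y[u_0,v]$, and compatibility with the action $\alpha_P$ forces $\rho_\PP \circ l_y = \rho_\PP$ (equivalently, this follows from density of $P$ in $S(\PP)$ together with the identity $\rho_\PP(y[u_0,v]) = t([u_0,v]) = \rho_\PP([u_0,v])$). Hence $z \in \rho_\PP^{-1}(x_0)$ implies $l_y(z) \in \rho_\PP^{-1}(x_0)$.

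Next I would consider the composition $l_y \circ l_z : S(\PP) \to S(\PP)$. It is continuous as a composition of continuous maps. It is compatible with $\alpha_P$, since for any $(w,g) \in S(\PP) \times_{\rho_\PP,s} G_1$ one has
\[
(l_y \circ l_z)(wg) \;=\; l_y\bigl(l_z(w)\, g\bigr) \;=\; l_y(l_z(w))\, g \;=\; (l_y \circ l_z)(w)\, g,
\]
using the compatibility of $l_z$ and then of $l_y$ (the intermediate pair $(l_z(w), g)$ is composable because $l_z$ preserves the anchor map). Finally, evaluating at $u_0$ gives $(l_y \circ l_z)(u_0) = l_y(z) = yz$.

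At this point the uniqueness part of Theorem \ref{thm:univSP}, applied with the distinguished base point $yz \in \rho_\PP^{-1}(x_0)$, shows that $l_y \circ l_z$ coincides with the unique continuous, action-compatible map sending $u_0$ to $yz$, which is by definition $l_{yz}$. Hence $l_y l_z = l_{yz}$. There is no substantial obstacle here: the entire content of the proposition is a direct application of the uniqueness clause of the universal property, the only bookkeeping being the verification that $l_y$ and $l_z$ preserve anchors, so that composing them is legitimate and lands back in $\rho_\PP^{-1}(x_0)$ at $u_0$.
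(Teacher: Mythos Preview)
Your proof is correct and follows exactly the same approach as the paper's: both argue that $l_y\circ l_z$ and $l_{yz}$ are $G_1$-equivariant maps $S(\PP)\to S(\PP)$ sending $u_0$ to the same point $yz$, and then invoke the uniqueness clause of Theorem~\ref{thm:univSP}. You simply spell out the bookkeeping (anchor preservation, composability) that the paper leaves implicit in its two-line argument.
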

\begin{proof}
The maps $l_{y}l_{z}$ and $l_{yz}$ are $G_1$-maps that send $u_0$ to the same element $yz$. By uniqueness the thesis follows.\end{proof}
\begin{prop}\label{traslazione}
Let $f:S(\PP)\rightarrow S(\PP)$ be a $G_1$-map (in particular a $G$-map) then $f=l_y$ for some $y\in\rho_{\PP}^{-1}(x_0)$
\end{prop}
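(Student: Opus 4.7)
The proof is essentially a direct invocation of the universal property of $S(\PP)$ (Theorem \ref{thm:univSP}), applied to $S(\PP)$ itself viewed as a $G_1$-space. The plan is as follows.

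First I would define the candidate element: set $y := f(u_0) \in S(\PP)$. The key preliminary observation is that $y$ lies in $\rho_\PP^{-1}(x_0)$. Indeed, being a $G_1$-map means $f$ intertwines the right $G_1$-action, i.e.\ $f(z\cdot g) = f(z)\cdot g$ whenever $s(g)=\rho_\PP(z)$. For both sides to be defined on the same fiber product, the anchor must be preserved, $\rho_\PP\circ f = \rho_\PP$. In particular $\rho_\PP(f(u_0)) = \rho_\PP(u_0) = x_0$, so $y\in\rho_\PP^{-1}(x_0)$ as required.

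Second, I would apply Theorem \ref{thm:univSP} to the target $Y = S(\PP)$ equipped with the $\GG$-action $\alpha_P$ of Theorem \ref{thm:SPaction}, whose anchor map $\rho_\PP$ is proper by Theorem \ref{thm:properSP}. Since $y\in\rho_\PP^{-1}(x_0)$, the universal property produces a unique continuous $G_1$-equivariant map $l_y : S(\PP)\to S(\PP)$ with $l_y(u_0) = y$. But $f$ itself is a continuous $G_1$-equivariant map $S(\PP)\to S(\PP)$ satisfying $f(u_0) = y$. By the uniqueness clause in Theorem \ref{thm:univSP}, $f = l_y$.

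There is no real obstacle here beyond verifying that the hypotheses of Theorem \ref{thm:univSP} are met by the ``tautological'' choice $(Y,\beta,\rho) = (S(\PP),\alpha_P,\rho_\PP)$; the only point requiring a moment's thought is the anchor-preservation $\rho_\PP\circ f = \rho_\PP$, which is forced by the very definition of a $G_1$-map. Everything else is a formal consequence of the universal property already established.
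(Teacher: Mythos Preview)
Your proposal is correct and follows essentially the same approach as the paper: set $y=f(u_0)$, then invoke the uniqueness clause of the universal property (Theorem~\ref{thm:univSP}) to conclude $f=l_y$. You add the explicit verification that $y\in\rho_\PP^{-1}(x_0)$ via anchor preservation, which the paper leaves implicit, but the argument is otherwise identical.
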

\begin{proof}
Denote $f(u_0)=y$ and consider the map $l_y$. Then $f$ and $l_y$ send $u_0$ to the same point and by uniqueness of $l_y$ we have $f=l_y$.
\end{proof}

\begin{prop}\label{idempotente}
Let $M(\PP)$ be a $G_1$-minimal flow of $S(\PP)$ then $M(\PP)=r_y(S(\PP))$ for some idempotent $y\in\rho_{\PP}^{-1}(x_0)\cap M(\PP)$.
Moreover $r_y$ is the identity on $M(\PP)$.
\end{prop}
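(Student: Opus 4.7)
The proof adapts the classical Ellis--Numakura argument for universal minimal $G$-flows to the groupoid setting, by exploiting that the fiber $\rho_\PP^{-1}(x_0)\cong S(G)$ carries the usual Uspenskij semigroup structure.

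First I would observe that $l_y$, being a continuous $G_1$-equivariant map, must satisfy $\rho_\PP\circ l_y=\rho_\PP$ (otherwise the equivariance equation $l_y(z g)=l_y(z)g$ would not even be well defined, since the composability condition $s(g)=\rho_\PP(l_y(z))$ is forced to coincide with $s(g)=\rho_\PP(z)$). So $l_y$ preserves each fiber of $\rho_\PP$. By Theorem \ref{thm:properSP} the fiber $\rho_\PP^{-1}(x_0)$ is $G[x_0]\cong G$-equivariantly homeomorphic to $S(G)$, and the composition law $l_yl_z=l_{yz}$ (together with continuity of $(y,z)\mapsto l_y(z)$ for fixed $z$, inherited from the universal construction) realizes $S(G)$ as a compact right-topological semigroup under $y\cdot z:=l_y(z)$. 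Modulo the right/left convention flagged in the paper, this is exactly the Uspenskij semigroup of \cite{usp}.

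Next I would locate the idempotent. Transitivity of $\GG$ combined with $G_1$-invariance of $M(\PP)$ gives that $N:=M(\PP)\cap\rho_\PP^{-1}(x_0)$ is non-empty: pick any $z\in M(\PP)$ and any $g\in G_1$ with $s(g)=\rho_\PP(z)$ and $t(g)=x_0$, so that $zg\in N$. The standard correspondence between $G_1$-invariant closed subsets of $S(\PP)$ and $G[x_0]$-invariant closed subsets of the fiber (proved by $G_1$-saturating and using local triviality near $x_0$ to close the orbit back down to the fiber) shows that $N$ is a minimal $G$-subflow of $S(G)$, hence a minimal left ideal of the semigroup. Ellis--Numakura applied to the closed subsemigroup $\{x\in N: xn=n\}$ for some fixed $n\in N$ (which is non-empty because $Nn=N$ by minimality) produces an idempotent $y=y^2\in N\subset M(\PP)$.

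The identity claim now comes out of equivariance and minimality: since $l_y(u_0)=y$ and $y^2=y$, we obtain $l_y(y)=y\cdot y=y$, and then $l_y(yg)=l_y(y)g=yg$ for every $g\in G_1$ with $s(g)=x_0$. Thus $l_y$ is the identity on the whole $G_1$-orbit $yG_1$, which is dense in $M(\PP)$ by minimality; continuity of $l_y$ promotes this to $l_y|_{M(\PP)}=\ide$. For the image equality, applying $l_y$ to $P=u_0\cdot G_1$ gives $l_y(P)=yG_1\subseteq M(\PP)$; since $P$ is dense in $S(\PP)$, $l_y$ is continuous, and $M(\PP)$ is closed, we deduce $l_y(S(\PP))\subseteq M(\PP)$, while the reverse inclusion is immediate from $l_y(M(\PP))=M(\PP)$.

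The one step I expect to require real care is the second: checking that the semigroup structure on the fiber is genuinely right-topological with respect to the Uspenskij convention (so that Ellis--Numakura applies) and that minimal $G_1$-subflows of $S(\PP)$ correspond to minimal $G[x_0]$-subflows (equivalently, minimal left ideals) of $S(G)$. Both facts essentially follow by transporting continuity and invariance through the universal property of $S(\PP)$, but they are where the groupoid generalization goes beyond a purely formal copy of Uspenskij's proof. Everything else reduces to equivariance plus the density of $P$ in $S(\PP)$.
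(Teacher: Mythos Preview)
Your proposal is correct and follows essentially the same route as the paper: intersect $M(\PP)$ with the fiber $\rho_\PP^{-1}(x_0)\cong S(G)$ to obtain a minimal $G$-subflow, invoke the Ellis--Numakura idempotent from \cite{usp}, and then deduce that $l_y$ restricts to the identity on $M(\PP)$. The only cosmetic difference is that the paper verifies $l_y|_{M(\PP)}=\ide$ via the semigroup identity $l_y\circ l_y=l_{y^2}=l_y$ applied to $M(\PP)=l_y(M(\PP))$, whereas you argue through equivariance on the dense orbit $yG_1$; you also spell out the inclusion $l_y(S(\PP))\subseteq M(\PP)$, which the paper leaves implicit.
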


\begin{proof} First observe that $M(\PP)\cap \rho_{\PP}^{-1}(x_0)$ is necessarily a $G$-minimal subflow of $\rho_{\PP}^{-1}(x_0)=S(G)$, otherwise $(M(\PP)\cap \rho_{\PP}^{-1}(x_0))G_1$ would be a proper subflow of $M(\PP)$, against the minimality of $M(\PP)$.
 By the results of Section 3 of \cite{usp} there exists a idempotent $y\in M(\PP)\cap \rho_{\PP}^{-1}(x_0)$ and one can consider the map $l_y:S(\PP)\to S(\PP)$, which, by minimality, is such that $l_y(M(\PP))=M(\PP)$. More precisely, from $y^2=y$ one finds $yyz=yz$ for any $z\in M(\PP)$ and, since the $yz$ span $M(\PP)$, the map $l_y$ is the identity on $M(\PP)$. \end{proof}
\begin{prop}\label{biezione}
Every $\GG$-map $f:M(\PP)\rightarrow M(\PP)$ is bijective.
\end{prop}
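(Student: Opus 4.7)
The plan is to show that $f$ coincides on $M(\PP)$ with the left-multiplication map $l_a$, where $a := f(y)$ for the idempotent $y$ of Proposition \ref{idempotente}, and then invoke Ellis theory to produce a two-sided inverse of $a$ inside a group contained in $M' := M(\PP) \cap \rho_\PP^{-1}(x_0)$.

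First I would identify $f$ with $l_a|_{M(\PP)}$. Since $f$ preserves the anchor $\rho_\PP$, we have $a \in M'$, so $l_a$ is defined. By Proposition \ref{idempotente} the image of $l_y$ is $M(\PP)$, so $f \circ l_y : S(\PP) \to S(\PP)$ is a well-defined $\GG$-map with $(f \circ l_y)(u_0) = f(y) = a$; the universal property (Theorem \ref{thm:univSP}) then forces $f \circ l_y = l_a$. Restricting to $M(\PP)$ and using $l_y|_{M(\PP)} = \mathrm{id}$ yields $f = l_a|_{M(\PP)}$.

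Next I would produce the inverse. Evaluating $f = l_a|_{M(\PP)}$ at $y$ gives $a = f(y) = l_a(y) = ay$, so $a \in M'y$. Viewing the compact fiber $\rho_\PP^{-1}(x_0) \cong S(G)$ as a left-topological semigroup with product $zw := l_z(w)$ (associativity coming from the composition rule $l_c \circ l_d = l_{cd}$, which is again Theorem \ref{thm:univSP}, and left multiplication continuous because $l_z$ is), the subset $M'$ is a minimal right ideal of $S(G)$: it is a right ideal because $l_z(S(\PP)) \subseteq M(\PP)$ for $z \in M(\PP)$, by density of $P$ in $S(\PP)$ and $\GG$-invariance of $M(\PP)$, and it is minimal because any closed right ideal contained in $M'$ would be a closed $G$-invariant subset of $M'$, hence all of $M'$ by minimality of the latter as a $G$-subflow (already noted in the proof of Proposition \ref{idempotente}). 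By the Ellis theorem for compact left-topological semigroups, as used in \cite{usp}, the set $M'y$ is then a group with identity $y$, so $a$ admits a two-sided inverse $a^{-1} \in M'y$ with $aa^{-1} = a^{-1}a = y$. The composition rule gives $l_a \circ l_{a^{-1}} = l_y = l_{a^{-1}} \circ l_a$, whose restriction to $M(\PP)$ is the identity, so $l_{a^{-1}}|_{M(\PP)}$ is a two-sided inverse of $f = l_a|_{M(\PP)}$.

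The main obstacle I expect is verifying that the Ellis--Uspenskij conclusion ``$M'y$ is a group with identity $y$'' really applies to this fiber semigroup. The key subtlety is promoting the $G$-invariant set $M'$ to a genuine right ideal of $S(G)$, since multiplication there is only separately continuous; the promotion uses density of $P$ in $S(\PP)$ and the $\GG$-equivariance of the maps $l_z$. Once $M'$ is recognized as a minimal right ideal, the crucial identity $ay = a$ forced by $a = f(y) = l_a(y)$ ensures that $a$ actually lies in the group $M'y$, which is what unlocks invertibility.
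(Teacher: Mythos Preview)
Your proof is correct and follows essentially the same approach as the paper: identify $f$ with $l_a|_{M(\PP)}$ via $f\circ l_y = l_a$ and uniqueness, then invoke the Ellis semigroup structure on the fiber $\rho_\PP^{-1}(x_0)\cong S(G)$ to find an inverse. The paper compresses the second half into ``the rest of the proof runs exactly as the proof of Proposition~3.4 of \cite{usp}'', while you spell out why $M'$ is a minimal right ideal and why $a\in M'y$ is invertible there; your added justifications (that $l_z(S(\PP))\subseteq M(\PP)$ for $z\in M(\PP)$ via density of $P$, and that $l_z$ preserves $\rho_\PP$) are exactly the checks needed to transport Uspenskij's argument to this setting.
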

\begin{proof}
 Composing $f$ with $l_y:S(\PP)\to M(\PP)$ of the Proposition \ref{idempotente}, and using Proposition \ref{traslazione}, we obtain
$f\circ l_y=l_z$ for some $z\in M(\PP)$. As $l_y|_{M(\PP)}$ is the identity map of $M(\PP)$, we obtain $f=l_z|_{M(\PP)}$. The rest of the proof runs exactly as the proof of Proposition 3.4 of \cite{usp}. \end{proof}
\begin{prop}\label{unicità}
$M(\PP)$ is unique up to isomorphisms.
\end{prop}
\begin{proof}
Let $M$ and $M'$ be two minimal flows of $S(\PP)$, according to universality of $S(\PP)$ there exists a $G_1$-map $f$ which sends $M$ in $M'$, reversing roles we obtain a similar map from $M'$ to $M$. By minimality of $M$ and $M'$ they are both onto functions, moreover their composition is $G_1$-map from $M$ to $M$, and, according to Proposition \ref{biezione} is actually a bijection, hence $f$ is injective, therefore $M$ and $M'$ are isomorphic.
\end{proof}
\begin{oss} Of course any minimal $G_1$ subflow $M(\PP)$ of $S(\PP)$ is a {\em universal minimal flow} for right $\GG$ actions with proper anchor map. Indeed if $M$ is a minimal flow, consider a $\GG$-invariant map $f:S(\PP)\to M$, whose existence is guaranteed by the universal property of $S(\PP)$, then $f(M(\PP))=M$ by minimality.  \end{oss}
\section{The case of extremely amenable groups}
It turns out that most well known groups, e.g. discrete groups, lie groups, or in general locally compact groups, have very big compactifications $S(G)$ and also very big universal minimal flows $M_G$. For example when $G$ is discrete then $S(G)=M_G=\beta G$, the Stone-Cech compactification of $G$, a fact originarily proved by Ellis and also a consequence of Veech's theorem \cite{Vee}. On the other hand some very huge groups, like $\operatorname{Homeo}_+(S^1)$, the group of orientation preserving self-homeomorphism of $S^1$, with the compact open topology, or $U(H)$, the unitary group of a infinite dimensional separable Hilbert space, with the strong operator topology, have small $M_G$, i.e. $M_G=S^1$ for the first and $M_G=\{*\}$ is a singleton in the latter case. We refer to section 4 of \cite{pestov} for a detailed discussion these and many other examples. In particular $U(H)$ is a prototypal example of a {\em extremely amenable group}, a fact proved in \cite{gromi}. We recall the general definition of this class of groups.
\begin{defin} A topological group $G$ is said to be {\em extremely amenable} if any continuous action of $G$ on a compact Hausdorff space has a fixed point. This is equivalent to say that $M_G$ is a point. \end{defin}
Assume that $\GG$ is a transitive groupoid with $G[x]=G$ an extremely amenable group. Then one has the following result about the actions of $\GG$ with proper anchor map, of which we give a proof independent of the rest of the paper for convenience of the reader.
\begin{thm}\label{thm:stupid} Let $G$ be a topological group. Then $G$ is extremely amenable if and only if for any locally trivial transitive groupoid $\GG$ with  structural group $G$ and any action of $\GG$ on a topological space $X$ with proper anchor map $\rho:X\to G_0$  there exists a continuous $\GG$-invariant section  $G_0\to X$ of $\rho$. \end{thm}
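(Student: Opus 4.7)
The plan is to prove both implications directly, avoiding the heavier machinery of $S(\PP)$ and $M(\PP)$, as the statement is elementary enough to be argued from first principles.

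For the reverse implication I would specialize to the one-object groupoid $G_0=\{*\}$, $G_1=G[*]=G$, which is tautologically a locally trivial transitive groupoid with structural group $G$. A proper anchor $\rho:X\to\{*\}$ forces $X$ to be compact, a $\GG$-action is just a continuous $G$-action on $X$, and a $\GG$-invariant section $\{*\}\to X$ is precisely a $G$-fixed point. The hypothesis therefore yields a fixed point for every continuous $G$-action on a compact Hausdorff space, i.e.\ extreme amenability.

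For the forward implication, assume $G$ is extremely amenable and fix an action $\beta:X\times_{\rho,s}G_1\to X$ with proper anchor $\rho:X\to G_0$. Pick a base point $x_0\in G_0$; properness of $\rho$ makes the fiber $\rho^{-1}(x_0)$ compact, and the isotropy group $G[x_0]\cong G$ acts continuously on it by restriction of $\beta$, so extreme amenability supplies a fixed point $y_0\in\rho^{-1}(x_0)$. By transitivity, every $x\in G_0$ admits at least one arrow $g\in G_1$ with $s(g)=x_0$ and $t(g)=x$; define $\sigma(x):=y_0\cdot g$, which lies in $\rho^{-1}(x)$ since $\rho(y_0\cdot g)=t(g)=x$. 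If $g'$ is another such arrow, then $g(g')^{-1}\in G[x_0]$ (up to the composition convention of the paper), and the action axioms together with $G[x_0]$-invariance of $y_0$ give $y_0\cdot g'=y_0\cdot g$, so $\sigma$ is well defined. Local triviality of $\GG$ supplies, on a neighborhood $U$ of each $x\in G_0$, a continuous section $\tau:U\to G_1$ of $t$ with $s\circ\tau\equiv x_0$, whence $\sigma|_U(y)=y_0\cdot\tau(y)$ is continuous as a composition of continuous maps. Finally, for any $h\in G_1$ with $s(h)=x$, applying well-definedness to the composite arrow $gh$ yields $\sigma(t(h))=y_0\cdot(gh)=(y_0\cdot g)\cdot h=\sigma(x)\cdot h$, which is the required $\GG$-invariance.

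The main delicacy lies in the well-definedness of $\sigma$, which requires a clean handling of the groupoid composition convention and of the isotropy subgroup $G[x_0]$ acting on the compact fiber $\rho^{-1}(x_0)$. Once this is in place, continuity follows at once from local triviality and continuity of the groupoid action, and $\GG$-invariance reduces to a single application of the associativity of the action on top of the well-definedness already established.
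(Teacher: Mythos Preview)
Your argument is correct and follows the same outline as the paper: specialize to the one-object groupoid for $(\Leftarrow)$, and for $(\Rightarrow)$ pick a $G[x_0]$-fixed point in the compact fiber, take its $G_1$-orbit, and check it is the image of a well-defined $\GG$-invariant section. The only difference is in the continuity step---the paper shows that $\rho$ restricted to the orbit $y_0G_1$ is an open map, whereas you write $\sigma|_U = y_0\cdot\tau$ for a local section $\tau$ of $t$; just note that Definition~\ref{def:loctriv} only guarantees $\tau$ with source equal to the \emph{local} point $x$, so you must first compose with a fixed arrow in $G[x_0,x]$ (available by transitivity) to obtain $s\circ\tau\equiv x_0$.
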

\begin{proof} ($\Rightarrow$) Let $\alpha:X\times_{\rho,s} G_1\to X$ be an action di $\GG$ on $X$ with $\rho:X\to G_0$ a proper map. Then for any $x\in G_0$ there exists an induced action $G[x]=G$ on $\rho^{-1}(x)$, which is a compact space.

Since $G$ is extremely amenable there exists a fixed point $z\in \rho^{-1}(x)$. Let us consider the orbit $zG_1\subset X$ and let us show that it is the image of a continuous section $\sigma:G_0\to X$. The fact the $zG_1$ is the image of a set-theoretical section is a consequence of the property that for any $x'\in G_0 $ one has $|\rho^{-1}(x')\cap zG_1|=1$.  By the transitivity of $\GG$  there exists $g\in G_1$ such that $s(g)=x$ and $t{g}=x'$. Then $t(zg)=x'$, therefore $|\rho^{-1}(x')\cap zG_1|\geq 1$. If $z',z''\in \rho^{-1}(x')\cap zG_1$, then $z'=zg$ e $z''=zh$ for suitable $g,h\in G_1$ and one has $t(g)=t(h)=\rho(z')=\rho(z'')=x'$. Hence $gh^{-1}\in G[x]$ and one has $z=zgh^{-1}$, which implies $z'=zg=zh=z''$.

Now let us show that $\sigma$ is continuous. As $\sigma $ is a bijection between $G_0$ and Im$(\sigma)=zG_1$, with inverse equal to the restriction $\rho|_{zG_1}$, then it is sufficient to show that $\rho|_{zG_1}$ is a open map. Let $W\cap zG_1$ an open set in the induced topology on $zG_1$, with $W$ open in $X$. If $z'\in zG_1\cap W$ then $z'=zh$ for ssome $h$ and $\rho(z')=t(h)$. It follows that $\rho(zG_1\cap W)=\{t(h)\ |\  zh\in W\}$. The set $\{(z,h)\ |\ zh\in W\}$ is equal to $\alpha^{-1}(W)\cap (z\times s^{-1}(x))$, which is homeomorphic to a open $W_0\subset s^{-1}(x)$, since $\alpha^{-1}(W)\subset X\times_{\rho,s}G_1$ is open by the continuity of the action $\alpha$. Now let us consider the restriction of the target map $t:s^{-1}(x)\to G_0$.  It easy to see that $\GG$ \`e locally trivial implies that $t$ is open.
Then we have $\rho(zG_1\cap W)=\{t(h)\ |\  zh\in W\}=t(W_0)$ open.
\paragraph{$(\Leftarrow)$} Let $X\times G\to X$ an action $G$ on a compact $X$. We consider the group as a groupoid, i.e. $G_1=G$, $G_0=\{1\}$ and the group action as a groupoid action $X\times_{\rho,s} G_1\to X$, with trivial anchor map $\rho:X\to\{1\}$. Then $\rho$ is proper and an invariant section corresponds to a fixed point $x=\sigma(1)\in X$ for the action of $G$.
\end{proof}

\begin{oss} The theorem above applies in particular to the action of $\GG(P)$ on $S(\PP)$ studied in the preceding sections, when $P$ is a principal bundle with structural group $G$ extremely amenable. In particular we obtain the result that any minimal flow $M(\PP)$ of $S(\PP)$ is the image of a section $\sigma : X=G_0\to S(\PP)$, which is also $\GG$ invariant. In particular $S(\PP)\setminus P$ is a union of images of invariant sections.\end{oss}
Recall that if a principal bundle $P\to X$ has a section $\sigma: X\to P$ then $P\to X$ is trivial. In the situation above we have invariant sections {\em at infinity}, i.e. with images contained in $S(\PP)\setminus \PP$. The following question then arises naturally
\begin{domanda}\label{conj} Let $P\to X$ a principal bundle with $G$ extremely amenable, arcwise connected and compactly generated. Is then $P$ a trivial bundle?\end{domanda} \begin{oss}
The condition on the topology of $G$ to be compactly generated is a natural assumption if one wants to apply results from algebraic topology. Indeed the same definition of homotopy groups $\pi_i(G)$ involves maps from the compact spaces $S^i$ to $G$ and therefore it is natural to assume that maps from compact spaces to $G$ detect the topology of $G$.

Note that if the question above has a positive answer, then $G$ is {\em aspherical} or {\em quasi-contractible}, that is all homotopy groups $\pi_i(G)$ are trivial. Indeed, from the theory of fiber bundles,  it is well known that the set Prin$_G(S^n)$ of isomorphism classes of principal $G$-bundles with base $X=S^n$ is in bijective correspondence with $\pi_{n-1}(G)$, for any $n\geq 1$.

We first recall that both the hypotheses of Question \ref{conj} and the conclusion are true, as far we know, for at least three groups. Observe that in all those cases the group $G$ is metrisable (as in the present paper), hence compactly generated.
Indeed,
 \begin{itemize}
   \item $U(H)$, the unitary group of infinite dimensional separable Hilbert space, is extremely amenable \cite{gromi}, on the other hand by Kuiper's theorem \cite{kui} is aspherical;
   \item $Iso(U)$, the isometry group of the Urysohn space, is extremely amenable \cite{GioPe} \cite{GioPe1}, on the other hand it is isomorphic to the infinite dimensional separable Hilbert space hence it is aspherical \cite{mel};
   \item $Aut(I,\mu)$, the group of measure preserving automorphisms of $I$, the unit interval, is extremely amenable \cite{GioPe} \cite{GioPe1}, on the other hand is isomorphic to the infinite dimensional separable Hilbert space hence it is aspherical \cite{ngu}.
 \end{itemize}
 For the metrisability of $U(H)$ with the strong operator topology one can see for example \cite{EspUr}.

    \end{oss}
\begin{oss} In case Question \ref{conj} admits a positive answer, at least under suitable topological conditions on $G$, a proof of this along the line of the present paper would require to prove the following.
\paragraph{\em Claim} Let $G$ be a topological group with the same topological assumptions as in Question \ref{conj} (but not necessarily extremely amenable). Let $P\to X$ be a non trivial principal bundle with structural group $G$, and let $\GG$ be its associated groupoid. Then there exists some $\GG$ equivariant compactification of $\overline{P}\to X$ of $P$ such that the fiber bundle $\overline{P}\setminus P\to X$ does not admit ($\GG$-equivariant) global sections.
\end{oss}
For some easy finite-dimensional topological group, like $G=\CC^\ast$, the claim above is true, and can be proved by elementary algebraic topology constructions, but for infinite dimensional groups like $G=U(H)$ it is far from clear to us if the claim is true.

\end{document}